\documentclass{amsart}
\usepackage{setspace}
\usepackage{a4}
\usepackage{amsthm}
\usepackage{latexsym}
\usepackage{amsfonts}
\usepackage{graphicx}
\usepackage{textcomp}
\usepackage{cite}
\usepackage{enumerate}
\usepackage{amssymb}
\usepackage{hyperref}
\usepackage{amsmath}
\usepackage{tikz}
\usepackage[mathscr]{euscript}
\usepackage{mathtools}
\newtheorem{theorem}{Theorem}[section]

\newtheorem{conjecture}[theorem]{Conjecture}
\newtheorem{corollary}[theorem] {Corollary}
\newtheorem{definition}[theorem]{Definition}

\newtheorem{problem}[theorem]{Problem}

\newtheorem{question}[theorem]{Question}
\title{This is the title}
\usepackage{fancyhdr}

\begin{document}
	\vspace{0.9cm}	
\hrule\hrule\hrule\hrule\hrule
\vspace{0.3cm}	
\begin{center}
{\bf{Quaternionic Buzano-Dragomir Inequality and Deutsch Entropic Uncertainty Principle}}\\
\vspace{0.3cm}
\hrule\hrule\hrule
\vspace{0.3cm}
\textbf{K. Mahesh Krishna}\\
School of Mathematics and Natural Sciences\\
Chanakya University Global Campus\\
NH-648, Haraluru Village\\
Devanahalli Taluk, 	Bengaluru  North District\\
Karnataka State 562 110 India \\
Email: kmaheshak@gmail.com\\

Date: \today
\hrule\hrule
\end{center}

\vspace{0.5cm}
\textbf{Abstract}: We show that the  Buzano-Dragomir inequality holds for quaternionic Hilbert spaces. Using this generalization, we derive the Dragomir inequality for the quaternionic numerical radius and Deutsch entropic uncertainty principle for quaternionic Hilbert spaces.

\textbf{Keywords}: Buzano-Dragomir Inequality, Numerical Radius, Uncertainty Principle.

\textbf{Mathematics Subject Classification (2020)}: 46C05, 15B33, 47A12, 42C15.\\

\hrule

\hrule
\section{Introduction}
More than 50 years ago, Buzano derived the following generalization of Cauchy-Schwarz inequality \cite{BUZANO}.
\begin{theorem}  \cite{FUJII, BUZANO, STEELE} \label{BT}   (\textbf{Buzano Inequality})
	Let $\mathcal{H}$ be a complex Hilbert space. Then 
	\begin{align}\label{BI}
		|\langle \tau, h\rangle \langle h, \omega\rangle |\leq \frac{\|h\|^2(|\langle \tau, \omega \rangle|+\|\tau\|\|\omega\|)}{2}\leq \|h\|^2\|\tau\|\|\omega\|, \quad \forall h, \tau, \omega \in \mathcal{H}.
	\end{align}
In other words, 
\begin{align*}
	\left|\left\langle \tau, \frac{h}{\|h\|}\right\rangle \left \langle \frac{h}{\|h\|}, \omega \right\rangle\right|\leq \frac{|\langle \tau, \omega \rangle|+\|\tau\|\|\omega\|}{2}\leq \|\tau\|\|\omega\|, \quad \forall  \tau, \omega \in \mathcal{H}, \forall h \in \mathcal{H}\setminus \{0\}.
\end{align*}
Further, we have the following.
\begin{enumerate}[\upshape(i)]
	\item Let $\tau, \omega \in \mathcal{H}\setminus\{0\}$ be such that $\langle \tau, \omega \rangle =0$. Choose $\alpha, \beta \in \mathbb{C}$ with $|\beta|=1$. Define 
	\begin{align*}
		h_{\alpha, \beta}\coloneqq \alpha \left(\frac{\tau}{\|\tau\|}+\beta\frac{\omega}{\|\omega\|}\right).
	\end{align*}
Then 
\begin{align*}
		|\langle \tau, 	h_{\alpha, \beta}\rangle \langle 	h_{\alpha, \beta}, \omega\rangle |= \frac{\|	h_{\alpha, \beta}\|^2(|\langle \tau, \omega \rangle|+\|\tau\|\|\omega\|)}{2}.
\end{align*}
	\item  Let $\tau, \omega \in \mathcal{H}\setminus\{0\}$ be such that $\langle \tau, \omega \rangle \neq 0$. For $\alpha \in \mathbb{C}$, define 
	\begin{align*}
		h_{\alpha}\coloneqq \alpha \left(\frac{\tau}{\|\tau\|}+\frac{\langle \tau, \omega \rangle}{|\langle \tau, \omega \rangle |}\frac{\omega}{\|\omega\|}\right).
	\end{align*}
	Then 
	\begin{align*}
		|\langle \tau, 	h_{\alpha}\rangle \langle 	h_{\alpha}, \omega\rangle |= \frac{\|	h_{\alpha}\|^2(|\langle \tau, \omega \rangle|+\|\tau\|\|\omega\|)}{2}.
	\end{align*}
\end{enumerate}
\end{theorem}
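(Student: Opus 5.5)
The plan is to reduce the inequality to the Cauchy--Schwarz inequality for a single auxiliary vector, using the standard reflection trick. By homogeneity in $h$, and since the case $h=0$ is trivial, we may assume $\|h\|=1$. Put $P\xi\coloneqq \langle \xi,h\rangle h$ and let $U\coloneqq 2P-I$. Then $U$ is self-adjoint and $U^2=I$, so $U$ is unitary and $\|U\tau\|=\|\tau\|$. The key identity is
\begin{align*}
2\langle \tau,h\rangle\langle h,\omega\rangle = 2\langle P\tau,\omega\rangle = \langle U\tau,\omega\rangle+\langle \tau,\omega\rangle .
\end{align*}
Taking absolute values and using the triangle inequality together with Cauchy--Schwarz gives
\begin{align*}
2|\langle \tau,h\rangle\langle h,\omega\rangle|\leq |\langle \tau,\omega\rangle|+\|U\tau\|\|\omega\|=|\langle \tau,\omega\rangle|+\|\tau\|\|\omega\|.
\end{align*}
This is the first inequality in (\ref{BI}). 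The second inequality follows at once from $|\langle \tau,\omega\rangle|\leq \|\tau\|\|\omega\|$. The normalized form is just the same statement after dividing by $\|h\|^2$.

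For the equality cases, write $u=\tau/\|\tau\|$ and $v=\omega/\|\omega\|$, and use the convention that the inner product is linear in the first slot. It suffices to take $\alpha=1$: both sides scale by $|\alpha|^2$, and the case $\alpha=0$ is trivial.

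In case (i), $\langle u,v\rangle=0$ and $|\beta|=1$, so $\|h\|^2=2$. Next, $\langle \tau,h\rangle=\|\tau\|\langle u,u+\beta v\rangle=\|\tau\|$ and $\langle h,\omega\rangle=\|\omega\|\langle u+\beta v,v\rangle=\beta\|\omega\|$. Hence the left side equals $\|\tau\|\|\omega\|$, and the right side equals $2\cdot\|\tau\|\|\omega\|/2$, so the two agree.

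In case (ii), set $\gamma=\langle \tau,\omega\rangle/|\langle\tau,\omega\rangle|$ and $c=\langle u,v\rangle$. Then $\gamma\bar c=|c|$, and
\begin{align*}
\|h\|^2=\|u+\gamma v\|^2=2+2\,\mathrm{Re}(\bar\gamma c)=2+2|c|.
\end{align*}
Further, $\langle \tau,h\rangle=\|\tau\|(1+\bar\gamma c)=\|\tau\|(1+|c|)$ and $\langle h,\omega\rangle=\|\omega\|(c+\gamma)=\gamma\|\omega\|(1+|c|)$, using $c=\gamma|c|$. So the left side is $\|\tau\|\|\omega\|(1+|c|)^2$. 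The right side is $(2+2|c|)(|c|\|\tau\|\|\omega\|+\|\tau\|\|\omega\|)/2$, which is the same quantity.

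The main obstacle I expect is bookkeeping rather than anything conceptual. One has to keep track of which slot carries the conjugate, so that the phase $\gamma$ cancels correctly, and one has to check that $U$ really is an isometry. The latter holds because $\|U\tau\|^2=\|\tau\|^2-4|\langle\tau,h\rangle|^2+4|\langle\tau,h\rangle|^2=\|\tau\|^2$.
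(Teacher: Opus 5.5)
Your proof is correct and is essentially the paper's argument. The paper proves the (quaternionic) inequality with the same isometric-reflection trick: it shows $\|2x\langle x,\omega\rangle-\omega\|x\|^2\|=\|\omega\|\|x\|^2$ and then applies Cauchy--Schwarz and the reverse triangle inequality, which is your $U=2P-I$ applied to $\omega$ rather than $\tau$ (its Buzano--Dragomir proof uses exactly $\|2P\tau-\tau\|=\|\tau\|$), and your explicit check of the equality cases, including the care about which slot is conjugate-linear, fills in what the paper leaves as ``direct computation.''
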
 
Ten years ago, in 2016, Dragomir made the following far reaching generalization of Theorem \ref{BT} \cite{DRAGOMIR2}.
\begin{theorem}   \cite{DRAGOMIR2} \label{DT} (\textbf{Buzano-Dragomir Inequality})
	Let $\mathcal{H}$ be a complex Hilbert space and  $P:\mathcal{H}\to \mathcal{H}$ be an orthogonal projection. Then 
	\begin{align}\label{DI}
		|\langle P\tau, \omega\rangle |\leq \frac{|\langle \tau, \omega \rangle|+\|\tau\|\|\omega\|}{2}, \quad \forall \tau, \omega \in \mathcal{H}.	
	\end{align}
\end{theorem}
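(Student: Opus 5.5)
The plan is to reduce the statement to the Cauchy--Schwarz inequality by writing the orthogonal projection $P$ in terms of a self-adjoint unitary. Set $U\coloneqq 2P-I$. Since $P=P^*=P^2$, $U$ is self-adjoint and $U^2=4P-4P+I=I$, so $U$ is unitary. In particular $\|U\tau\|=\|\tau\|$ for all $\tau\in\mathcal{H}$. Then $P=\frac{1}{2}(I+U)$, and consequently
\begin{align*}
	\langle P\tau, \omega\rangle = \frac{1}{2}\left(\langle \tau, \omega\rangle + \langle U\tau, \omega\rangle\right), \quad \forall \tau, \omega \in \mathcal{H}.
\end{align*}

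Applying the triangle inequality and then Cauchy--Schwarz to the second term gives
\begin{align*}
	|\langle P\tau, \omega\rangle |\leq \frac{|\langle \tau, \omega\rangle| + |\langle U\tau, \omega\rangle|}{2}\leq \frac{|\langle \tau, \omega\rangle| + \|U\tau\|\|\omega\|}{2}=\frac{|\langle \tau, \omega\rangle| + \|\tau\|\|\omega\|}{2},
\end{align*}
which is exactly (\ref{DI}).

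As a consistency check, taking $P$ to be the rank-one projection $\tau\mapsto \langle \tau, h\rangle h/\|h\|^2$ recovers the first inequality in (\ref{BI}) of Theorem \ref{BT}.

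There is no real obstacle in this argument. The only point needing care is the isometry property $\|(2P-I)\tau\|=\|\tau\|$. This can be checked directly by expanding
\begin{align*}
	\|(2P-I)\tau\|^2=4\|P\tau\|^2-4\langle P\tau,\tau\rangle+\|\tau\|^2=\|\tau\|^2,
\end{align*}
using $\langle P\tau,\tau\rangle=\langle P\tau,P\tau\rangle=\|P\tau\|^2$. This step relies only on $P$ being self-adjoint and idempotent, so the same argument should carry over verbatim to the quaternionic setting treated later in the paper.
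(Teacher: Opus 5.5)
Your proof is correct and is essentially the paper's argument. The paper proves only the quaternionic analogue, following Dragomir: it shows $\|2P\tau-\tau\|=\|\tau\|$ and then bounds $|\langle 2P\tau,\omega\rangle|-|\langle\tau,\omega\rangle|\leq|\langle 2P\tau-\tau,\omega\rangle|\leq\|\tau\|\|\omega\|$, which is your $U=2P-I$ step without naming $U$ and which specializes to the complex statement, confirming your remark that only real scalars are involved.
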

Now note that for any nonzero $h\in \mathcal{H}$, the map 
\begin{align*}
	\mathcal{H} \ni \tau \mapsto \left\langle \tau, \frac{h}{\|h\|} \right \rangle \frac{h}{\|h\|} \in 	\mathcal{H}
\end{align*}
is a rank one orthogonal projection.  Thus Inequality (\ref{DI}) recovers Inequality (\ref{BI}). In 2017, Dragomir further improved Theorem \ref{DT} for bounded linear operators \cite{DRAGOMIR3}. 
\begin{theorem} \cite{DRAGOMIR3} \label{DIG} (\textbf{Buzano-Dragomir Inequality})
	Let $\mathcal{H}$ be a complex Hilbert space and  $T:\mathcal{H}\to \mathcal{H}$ be a bounded linear operator. Then 
\begin{align}\label{DI3}
	|\langle T\tau, T\omega\rangle |\leq \frac{\|T\|^2(|\langle \tau, \omega \rangle|+\|\tau\|\|\omega\|)}{2}, \quad \forall \tau, \omega \in \mathcal{H}.	
\end{align}	
\end{theorem}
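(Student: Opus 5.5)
The plan is to reduce Theorem \ref{DIG} to the projection case, Theorem \ref{DT}, after normalising $T$. If $T=0$ there is nothing to prove. Otherwise replace $T$ by $T/\|T\|$; both sides of (\ref{DI3}) scale by $\|T\|^2$, so we may assume $\|T\|=1$. Then $\langle T\tau, T\omega\rangle=\langle A\tau,\omega\rangle$ with $A\coloneqq T^*T$, and $0\leq A\leq I$. The goal becomes
\begin{align*}
|\langle A\tau,\omega\rangle|\leq \frac{|\langle \tau,\omega\rangle|+\|\tau\|\|\omega\|}{2}
\end{align*}
for every positive contraction $A$.

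The cleanest route avoids projections and uses the operator $B\coloneqq 2A-I$. Since $0\le A\le I$, $B$ is self-adjoint with $-I\le B\le I$, so $\|B\|\le 1$. Then $A=\tfrac12(I+B)$, and
\begin{align*}
|\langle A\tau,\omega\rangle|=\tfrac12|\langle \tau,\omega\rangle+\langle B\tau,\omega\rangle|\leq \tfrac12\big(|\langle\tau,\omega\rangle|+\|B\|\|\tau\|\|\omega\|\big)\leq \tfrac12\big(|\langle\tau,\omega\rangle|+\|\tau\|\|\omega\|\big).
\end{align*}
Here we use the triangle inequality and then Cauchy--Schwarz. Undoing the normalisation gives (\ref{DI3}). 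The same argument, with $B=2P-I$ a unitary symmetry, reproves Theorem \ref{DT}, which is a good consistency check.

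The only step needing care is the claim $\|2A-I\|\le1$. It follows from the numerical range of the self-adjoint operator $B$ lying in $[-1,1]$: indeed $\langle A x,x\rangle=\|Tx\|^2\in[0,\|x\|^2]$. For self-adjoint operators the norm equals the numerical radius, which gives the bound. Alternatively, one could invoke Theorem \ref{DT} via a Naimark/Halmos dilation of $A$ to a projection $P$ on $\mathcal H\oplus\mathcal H$, with $A$ the compression of $P$. Then apply (\ref{DI}) to $(\tau,0)$ and $(\omega,0)$, whose inner products and norms equal those of $\tau,\omega$. I would prefer the direct argument, since it avoids the dilation construction.
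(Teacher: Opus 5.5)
Your proof is correct. It takes a different route from the paper's, which only cites this theorem for complex spaces but proves the quaternionic analogue by following Dragomir's argument; that argument specializes verbatim to the complex case.

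The paper's route is as follows:
\begin{itemize}
\item It introduces the semi-inner product $[x,y]\coloneqq \|T\|^2\langle x,y\rangle-\langle Tx,Ty\rangle$, which is positive semidefinite because $\|Tx\|\le\|T\|\|x\|$.
\item It applies Cauchy--Schwarz to $[\cdot,\cdot]$ and uses the elementary inequality $(b^2-a^2)(d^2-c^2)\le (bd-ac)^2$ to obtain the refinement
\begin{align*}
\big|\|T\|^2\langle\tau,\omega\rangle-\langle T\tau,T\omega\rangle\big|\le \|T\|^2\|\tau\|\|\omega\|-\|T\tau\|\|T\omega\|.
\end{align*}
\item It finishes with the triangle inequality and $|\langle T\tau,T\omega\rangle|\le\|T\tau\|\|T\omega\|$.
\end{itemize}

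You instead pass to the positive contraction $A=T^*T/\|T\|^2$ and observe that $2A-I$ is a self-adjoint contraction. This is exactly the reflection trick the paper uses in the projection case, where $\|2P\tau-\tau\|=\|\tau\|$, now extended from projections to positive contractions. Your intermediate statement is the disk localization
\begin{align*}
\big|2\langle T\tau,T\omega\rangle-\|T\|^2\langle\tau,\omega\rangle\big|\le\|T\|^2\|\tau\|\|\omega\|.
\end{align*}

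What your version buys:
\begin{itemize}
\item It is shorter.
\item It handles Theorems \ref{DT} and \ref{DIG} with one argument.
\item It proves the bound $|\langle A\tau,\omega\rangle|\le\frac12(|\langle\tau,\omega\rangle|+\|\tau\|\|\omega\|)$ for every operator $A$ with $\|2A-I\|\le 1$, not only for $A=T^*T/\|T\|^2$.
\end{itemize}
The price is that you need the adjoint and the fact that a self-adjoint operator's norm equals its numerical radius.

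What the paper's version buys:
\begin{itemize}
\item It avoids both of those ingredients, working only with the form $\langle Tx,Ty\rangle$ and Cauchy--Schwarz for a semi-inner product. This is why it ports to quaternionic Hilbert spaces with no extra work.
\item Its intermediate inequality carries the additional information $\|T\tau\|\|T\omega\|$.
\end{itemize}
Your argument would also transfer to quaternionic Hilbert spaces. Adjoints exist there, and for self-adjoint $B$ on $\mathcal{Q}$ the usual polarization argument still gives $\|B\|=w_{\mathbb{H}}(B)$: one has $\langle B(x+y),x+y\rangle-\langle B(x-y),x-y\rangle=4\,\mathrm{Re}\langle Bx,y\rangle$, and then one takes $y=Bx/\|Bx\|$.
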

Inequality (\ref{BI}) has  various applications, we describe few.  Recall that \cite{GUSTAFSONRAO, HALMOS, HORNJOHNSON, GOLDBERGTADMOR, GAUWUBOOK} the \textbf{numerical range} of a bounded linear operator $T:\mathcal{H} \to \mathcal{H} $ is defined as 
\begin{align*}
	W_\mathbb{C}(T)\coloneqq \{\langle Th, h\rangle: h \in \mathcal{H}, \|h\|=1\}
\end{align*}
and the \textbf{numerical radius} $T$ is defined as 
\begin{align*}
	w_\mathbb{C}(T)\coloneqq \sup_{h \in \mathcal{H}, \|h\|=1}|\langle Th, h\rangle|.
\end{align*}
For  $ \tau, \omega \in \mathcal{H}$, define the rank one operator  
\begin{align*}
	\tau \otimes \omega : \mathcal{H} \ni h \mapsto (\tau \otimes \omega)h \coloneqq \langle h, \omega \rangle \tau \in \mathcal{H}.
\end{align*}
In 1993, Fujii and Kubo derived the following result, using Inequality (\ref{BI}) \cite{FUJII}.
\begin{theorem} \cite{FUJII}
	Let $\mathcal{H}$ be a complex Hilbert space and $ \tau, \omega \in \mathcal{H}$. Then 
	\begin{align*}
	\text{(\textbf{Fujii-Kubo Equality})} \quad	w_\mathbb{C}(\tau \otimes \omega)=\frac{|\langle \tau, \omega \rangle|+\|\tau\|\|\omega\|}{2}.
	\end{align*}
\end{theorem}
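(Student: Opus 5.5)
We prove the Fujii-Kubo Equality by establishing the two inequalities separately. The case $\tau=0$ or $\omega=0$ is trivial, since then $\tau\otimes\omega=0$ and both sides vanish. So assume $\tau,\omega\neq 0$.

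\emph{Upper bound.} For any unit vector $h\in\mathcal{H}$ we have
\begin{align*}
	\langle (\tau\otimes\omega)h, h\rangle=\langle h,\omega\rangle\langle \tau, h\rangle .
\end{align*}
Applying Inequality (\ref{BI}) of Theorem \ref{BT} with $\|h\|=1$ gives
\begin{align*}
	|\langle \tau, h\rangle\langle h,\omega\rangle|\leq \frac{|\langle \tau,\omega\rangle|+\|\tau\|\|\omega\|}{2}.
\end{align*}
Taking the supremum over unit vectors $h$ yields $w_\mathbb{C}(\tau\otimes\omega)\leq \frac{|\langle \tau,\omega\rangle|+\|\tau\|\|\omega\|}{2}$.

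\emph{Lower bound.} Here we use the equality cases (i) and (ii) of Theorem \ref{BT}. If $\langle\tau,\omega\rangle=0$, take $h=h_{\alpha,1}$. If $\langle\tau,\omega\rangle\neq 0$, take $h=h_\alpha$. In either case the statement gives
\begin{align*}
	|\langle \tau, h\rangle\langle h,\omega\rangle|=\frac{\|h\|^2(|\langle \tau,\omega\rangle|+\|\tau\|\|\omega\|)}{2}.
\end{align*}
To use $h$ as a test vector for the numerical radius, we need $h\neq 0$ and then normalize. In case (i), orthogonality of $\tau$ and $\omega$ gives $\|\tau/\|\tau\|+\omega/\|\omega\|\|^2=2$, so $h\neq0$ for any $\alpha\neq0$. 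In case (ii), write $\gamma=\langle\tau,\omega\rangle/|\langle\tau,\omega\rangle|$. Expanding,
\begin{align*}
	\left\|\frac{\tau}{\|\tau\|}+\gamma\frac{\omega}{\|\omega\|}\right\|^2=2+2\,\mathrm{Re}\,\frac{\bar\gamma\langle\tau,\omega\rangle}{\|\tau\|\|\omega\|}=2+\frac{2|\langle\tau,\omega\rangle|}{\|\tau\|\|\omega\|}>0 .
\end{align*}
Hence $h\neq 0$ for $\alpha\neq 0$ here as well. We may then choose $\alpha$ so that $\|h\|=1$. Dividing by $\|h\|^2$ then shows that the supremum defining $w_\mathbb{C}(\tau\otimes\omega)$ is attained and equals the right-hand side.

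The only delicate step is confirming that the extremal vectors are nonzero, so that they can be normalized. The expansion above settles this, and everything else follows directly from Theorem \ref{BT}.
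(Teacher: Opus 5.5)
Your proof is correct and is essentially the argument the paper uses. The paper cites this complex statement without proof; for its quaternionic analogue it obtains the upper bound from the Buzano inequality, and the lower bound from explicit unit vectors $y$ and $z$, which are exactly your $h_{\alpha,1}$ and $h_\alpha$ with $\alpha$ chosen so that $\|h\|=1$. Your check that $\bigl\|\tfrac{\tau}{\|\tau\|}+\gamma\tfrac{\omega}{\|\omega\|}\bigr\|^2>0$ corresponds to the paper's computation of $\|z\|=1$.
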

Let $p(z)\coloneqq a_0+a_1z+\cdots +a_{n-1}z^{n-1}+z^n \in \mathbb{C}[z]$. A direct observation reveals that the zeros of $p$ are the eigenvalues of the \textbf{Frobenius companion matrix} 
\begin{align*}
	C_p\coloneqq 
	\begin{pmatrix}
		-a_{n-1}&-a_{n-2}&-a_{n-3}&\cdots &-a_2 &-a_1&-a_0\\
		1&0&0&\cdots &0 &0&0\\
		0&1&0&\cdots &0 &0&0\\
		\vdots &\vdots &\vdots & & \vdots &\vdots &\vdots\\
		0&0&0&\cdots &0 &0&0\\
		0&0&0&\cdots &1 &0&0\\
	 	0&0&0&\cdots &0 &1&0\\
	\end{pmatrix} \in \mathbb{M}_n(\mathbb{C})
\end{align*}
and the eigenvalues of $C_p$ are the same as the zeros of $p$. Let $R$ be the right shift matrix defined by 
\begin{align*}
	R\coloneqq 
	\begin{pmatrix}
	     0&0&0&\cdots &0 &0&0\\
		1&0&0&\cdots &0 &0&0\\
		0&1&0&\cdots &0 &0&0\\
		\vdots &\vdots &\vdots & & \vdots &\vdots &\vdots\\
		0&0&0&\cdots &0 &0&0\\
		0&0&0&\cdots &1 &0&0\\
		0&0&0&\cdots &0 &1&0\\
	\end{pmatrix} \in \mathbb{M}_n(\mathbb{C}).
\end{align*}
Define 
\begin{align*}
	{\bf{a}}\coloneqq \begin{pmatrix}
		\overline{a_{n-1}}\\
		\overline{a_{n-2}}\\
			\overline{a_{n-3}}\\
		\vdots \\
			\overline{a_{2}}\\
		\overline{a_{1}}\\
		\overline{a_{0}}\\
	\end{pmatrix}, \quad 
	{\bf{e_1}} \coloneqq \begin{pmatrix}
1\\
0\\
0\\
	\vdots \\
	0\\
	0\\
0\\
\end{pmatrix}. 
\end{align*}
Now the operator $e_1\otimes {\bf{a}}$ has matrix representation 
\begin{align*}
e_1\otimes {\bf{a}}=	\begin{pmatrix}
	a_{n-1}&a_{n-2}&a_{n-3}&\cdots &a_2 &a_1&a_0\\
	0&0&0&\cdots &0 &0&0\\
	0&0&0&\cdots &0 &0&0\\
	\vdots &\vdots &\vdots & & \vdots &\vdots &\vdots\\
	0&0&0&\cdots &0 &0&0\\
	0&0&0&\cdots &0 &0&0\\
	0&0&0&\cdots &0 &0&0\\
\end{pmatrix} \in \mathbb{M}_n(\mathbb{C}).
\end{align*}
Then 
\begin{align}\label{FK}
	C_p=R-e_1\otimes {\bf{a}}.
\end{align}
In 1993, Fujii and Kubo derived following bound for the zeros of $p$ using Inequality (\ref{BI}) and the representation (\ref{FK}) \cite{FUJII}.
\begin{theorem} \cite{FUJII}
	Let $p(z)\coloneqq a_0+a_1z+\cdots +a_{n-1}z^{n-1}+z^n \in \mathbb{C}[z]$.  If $\lambda$ is a  zero of $p$, then 
	\begin{align}\label{FNR}
		|\lambda|&\leq w_\mathbb{C}(R)+\frac{\sqrt{\sum_{j=0}^{n-1}|a_j|^2}+|a_{n-1}|}{2}=\cos\left(\frac{\pi}{n+1}\right)+\frac{\sqrt{\sum_{j=0}^{n-1}|a_j|^2}+|a_{n-1}|}{2}.
	\end{align}
\end{theorem}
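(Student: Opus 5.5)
Let $\lambda$ be a zero of $p$. Since the eigenvalues of $C_p$ are exactly the zeros of $p$, $\lambda$ is an eigenvalue of $C_p$. Choose a unit eigenvector $h\in\mathbb{C}^n$ with $C_ph=\lambda h$. Then $\lambda=\langle C_ph,h\rangle\in W_\mathbb{C}(C_p)$, and therefore $|\lambda|\leq w_\mathbb{C}(C_p)$.

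Next we use the representation (\ref{FK}), $C_p=R-e_1\otimes {\bf{a}}$. The numerical radius is subadditive, so
\begin{align*}
|\lambda|\leq w_\mathbb{C}(C_p)\leq w_\mathbb{C}(R)+w_\mathbb{C}(e_1\otimes {\bf{a}}).
\end{align*}
By the Fujii-Kubo equality,
\begin{align*}
w_\mathbb{C}(e_1\otimes {\bf{a}})=\frac{|\langle e_1,{\bf{a}}\rangle|+\|e_1\|\|{\bf{a}}\|}{2}.
\end{align*}
Here $\|e_1\|=1$ and $\|{\bf{a}}\|=\sqrt{\sum_{j=0}^{n-1}|a_j|^2}$. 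The inner product picks out the conjugate of the first entry of ${\bf{a}}$, namely $\langle e_1,{\bf{a}}\rangle=a_{n-1}$, so $|\langle e_1,{\bf{a}}\rangle|=|a_{n-1}|$. This gives the first inequality in (\ref{FNR}).

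For the equality $w_\mathbb{C}(R)=\cos(\pi/(n+1))$, write $R=A+iB$ with $A=(R+R^*)/2$ and $B=(R-R^*)/(2i)$. Here $A$ is the tridiagonal matrix with $1/2$ on the sub- and superdiagonals.

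The key symmetry is that for any real $\theta$, the diagonal unitary $D=\mathrm{diag}(1,e^{i\theta},\dots,e^{i(n-1)\theta})$ satisfies $DRD^*=e^{i\theta}R$. Hence $W_\mathbb{C}(R)$ is rotation invariant, and
\begin{align*}
w_\mathbb{C}(R)=\sup_\theta \sup_{\|h\|=1}\mathrm{Re}\langle e^{-i\theta}Rh,h\rangle=\sup_{\|h\|=1}\langle Ah,h\rangle=\lambda_{\max}(A).
\end{align*}

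It remains to compute the eigenvalues of $A$. They are the classical values $\cos(k\pi/(n+1))$, $k=1,\dots,n$, with eigenvectors $(\sin(jk\pi/(n+1)))_{j=1}^n$. One checks this directly with the identity $\sin((j-1)x)+\sin((j+1)x)=2\cos x\sin(jx)$, where the boundary terms vanish because $\sin 0=\sin((n+1)x)=0$ for $x=k\pi/(n+1)$. The largest eigenvalue is $\cos(\pi/(n+1))$.

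The main obstacle is this computation of $w_\mathbb{C}(R)$: the rotation-invariance reduction and the tridiagonal eigenvalue verification. The Buzano-based Fujii-Kubo equality then handles the rank-one perturbation immediately.
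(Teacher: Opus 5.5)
Your proof is correct. The inequality part is the same argument the paper has in mind, and which it writes out in its quaternionic companion-matrix theorem: $\lambda\in W_\mathbb{C}(C_p)$, subadditivity of $w_\mathbb{C}$ applied to $C_p=R-e_1\otimes\mathbf{a}$, and the Fujii--Kubo equality with $|\langle e_1,\mathbf{a}\rangle|=|a_{n-1}|$ and $\|\mathbf{a}\|=\sqrt{\sum_{j=0}^{n-1}|a_j|^2}$.

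The genuine difference is the identity $w_\mathbb{C}(R)=\cos(\pi/(n+1))$. The paper does not prove it; it imports it from Davidson and Holbrook, as part of their study of numerical radii of zero-one matrices. You instead derive it from the unitary similarity $DRD^*=e^{i\theta}R$, the reduction to $\lambda_{\max}$ of $A=(R+R^*)/2$, and the explicit sine eigenvectors. All of these steps check out, and they make the statement self-contained rather than dependent on an external computation.

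The rotation step can even be bypassed. With $r=(|h_1|,\dots,|h_n|)$ you have $|\langle Rh,h\rangle|\le\sum_{k=2}^{n}|h_{k-1}||h_k|=\langle Ar,r\rangle\le\cos(\pi/(n+1))$. Equality holds at the normalized positive eigenvector with entries $\sin(j\pi/(n+1))$, $1\le j\le n$. This version uses only $|ab|=|a||b|$, not commuting scalar phases, so it works verbatim in $\mathbb{H}^n$. It therefore gives $w_\mathbb{H}(R)=\cos(\pi/(n+1))$, which is exactly the value the paper says it was unable to obtain for the shift over the quaternions.
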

For any bounded linear operator $T:\mathcal{H} \to \mathcal{H} $, the Cauchy-Schwarz inequality gives
\begin{align}\label{NR}
		w_\mathbb{C}(T)\leq \|T\|.
\end{align} 
In 2007,  Dragomir strengthened Inequality (\ref{NR})  using Buzano inequality \cite{DRAGOMIR}.
\begin{theorem} \cite{DRAGOMIR} \label{DIT} (\textbf{Dragomir Numerical Radius Inequality}) Let $T:\mathcal{H} \to \mathcal{H} $ be a bounded linear operator. Then
	\begin{align*}
		w_\mathbb{C}(T) \leq \frac{1}{\sqrt{2}}\sqrt{w_\mathbb{C}(T^2)+\|T\|^2}\leq \|T\|.
	\end{align*}
\end{theorem}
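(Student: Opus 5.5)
The plan is to apply the Buzano inequality (Theorem \ref{BT}) to a unit vector $h$ with the choice $\tau = Th$ and $\omega = T^*h$, and then take a supremum over $h$. The second inequality in the statement is immediate once the first is established.

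\textbf{Step 1 (pointwise bound).} Fix $h\in\mathcal{H}$ with $\|h\|=1$. Then
\begin{align*}
|\langle Th,h\rangle|^2=|\langle Th,h\rangle\langle h,T^*h\rangle| .
\end{align*}
This holds because $\langle h,T^*h\rangle=\langle Th,h\rangle$, so the right-hand side is $|\langle Th,h\rangle|\cdot|\overline{\langle Th,h\rangle}|$. Applying Inequality (\ref{BI}) with $\tau=Th$ and $\omega=T^*h$ gives
\begin{align*}
|\langle Th,h\rangle|^2\leq \frac{|\langle Th,T^*h\rangle|+\|Th\|\|T^*h\|}{2}.
\end{align*}

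\textbf{Step 2 (identify the terms).} For the first term, $\langle Th,T^*h\rangle=\langle T^2h,h\rangle$, so $|\langle Th,T^*h\rangle|\leq w_\mathbb{C}(T^2)$. For the second term, $\|Th\|\leq\|T\|$ and $\|T^*h\|\leq\|T^*\|=\|T\|$, so $\|Th\|\|T^*h\|\leq\|T\|^2$. Together these give
\begin{align*}
|\langle Th,h\rangle|^2\leq \frac{w_\mathbb{C}(T^2)+\|T\|^2}{2}.
\end{align*}
Taking the supremum over all unit vectors $h$ and then square roots yields $w_\mathbb{C}(T)\leq \frac{1}{\sqrt2}\sqrt{w_\mathbb{C}(T^2)+\|T\|^2}$.

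\textbf{Step 3 (second inequality).} By Inequality (\ref{NR}) applied to $T^2$, we have $w_\mathbb{C}(T^2)\leq\|T^2\|\leq\|T\|^2$. Hence $\frac{1}{\sqrt2}\sqrt{w_\mathbb{C}(T^2)+\|T\|^2}\leq\|T\|$.

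The only step that needs care is the choice $\omega=T^*h$ in Step 1. It is what turns the product $\langle Th,h\rangle\langle h,T^*h\rangle$ into $|\langle Th,h\rangle|^2$, and at the same time it turns $\langle\tau,\omega\rangle$ into the quadratic form of $T^2$. The rest of the argument is routine.
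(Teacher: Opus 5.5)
Your proof is correct and follows the same route the paper takes for its quaternionic version of this theorem: apply Inequality (\ref{BI}) with $\tau=Th$ and $\omega=T^*h$, use $\langle Th,T^*h\rangle=\langle T^2h,h\rangle$ and $\|Th\|\|T^*h\|\leq\|T\|^2$, and take the supremum over unit vectors. Your Step 3 also supplies the second inequality via $w_\mathbb{C}(T^2)\leq\|T^2\|\leq\|T\|^2$, which the paper's quaternionic statement omits. One cosmetic point: since $\langle h,T^*h\rangle=\langle Th,h\rangle$ exactly, the product is $\langle Th,h\rangle^2$ and the conjugate bar you wrote is unnecessary, though harmless because it does not change the modulus.
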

In 2016, Dragomir derived following result using Inequality (\ref{DI}).
\begin{theorem}\cite{DRAGOMIR2} \label{DI2}
Let $P:\mathcal{H} \to \mathcal{H} $ be an orthogonal projection. For any two bounded linear operators  $T, S:\mathcal{H} \to \mathcal{H} $, we have
\begin{align*}
	w_\mathbb{C}(TPS) \leq	\frac{w_\mathbb{C}(TS)+\|T\|\|S\|}{2}
\end{align*}
and 
\begin{align*}
	\|TPS\|\leq \frac{\|TS\|+\|T\|\|S\|}{2}.
\end{align*}
In particular, 
\begin{align*}
	w_\mathbb{C}(TPT) \leq	\frac{w_\mathbb{C}(T^2)+\|T\|^2}{2}
\end{align*}
and 
\begin{align*}
	\|TPT\|\leq \frac{\|T^2\|+\|T\|^2}{2}.
\end{align*}
\end{theorem}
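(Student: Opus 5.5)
The plan is to apply Theorem \ref{DT} (Inequality (\ref{DI})) to vectors chosen so that $\langle P\tau,\omega\rangle$ becomes $\langle TPSh, h\rangle$, and then take suprema. For the numerical radius statement, fix a unit vector $h\in\mathcal{H}$ and set $\tau\coloneqq Sh$, $\omega\coloneqq T^*h$. Then
\begin{align*}
\langle TPSh, h\rangle=\langle PSh, T^*h\rangle=\langle P\tau,\omega\rangle, \qquad \langle \tau,\omega\rangle=\langle Sh, T^*h\rangle=\langle TSh,h\rangle .
\end{align*}
Inequality (\ref{DI}) then gives
\begin{align*}
|\langle TPSh,h\rangle|\leq \frac{|\langle TSh,h\rangle|+\|Sh\|\|T^*h\|}{2}\leq \frac{w_\mathbb{C}(TS)+\|T\|\|S\|}{2},
\end{align*}
using $\|T^*\|=\|T\|$ and $\|h\|=1$. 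Taking the supremum over unit $h$ yields the first inequality.

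For the norm statement, I would use the representation $\|A\|=\sup\{|\langle Ah,g\rangle|:\|h\|=\|g\|=1\}$. Fix unit vectors $h,g$ and set $\tau\coloneqq Sh$, $\omega\coloneqq T^*g$. Then $\langle TPSh,g\rangle=\langle P\tau,\omega\rangle$ and $\langle\tau,\omega\rangle=\langle TSh,g\rangle$. Inequality (\ref{DI}) gives
\begin{align*}
|\langle TPSh,g\rangle|\leq \frac{|\langle TSh,g\rangle|+\|S\|\|T\|}{2}\leq \frac{\|TS\|+\|T\|\|S\|}{2},
\end{align*}
and taking the supremum over $h,g$ gives the bound on $\|TPS\|$.

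The particular cases follow by setting $S=T$. No step is a genuine obstacle; the only point requiring care is moving $T$ across the inner product as $T^*$ so that the projection lands in the form $\langle P\tau,\omega\rangle$ required by Theorem \ref{DT}.
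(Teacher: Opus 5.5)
Your proof is correct and follows the route the paper indicates: the paper gives no separate proof and only attributes the result to Dragomir as a consequence of Inequality (\ref{DI}). That is exactly your argument: substitute $\tau=Sh$ and $\omega=T^*h$ (respectively $\omega=T^*g$ for the norm) into (\ref{DI}), take suprema over unit vectors, and set $S=T$ for the particular cases.
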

It is observed very recently that using Inequality (\ref{BI}) we can give a very easy proof of Deutsch entropic uncertainty principle \cite{KRISHNA}.
Let $\mathcal{H}$ be a finite dimensional complex Hilbert space. Given an orthonormal basis  $\{\tau_j\}_{j=1}^n$ for $\mathcal{H}$, the \textbf{ (finite) Shannon entropy}  at a point $h \in \mathcal{H}_\tau$ is defined as 
\begin{align*}
	S_\tau (h)\coloneqq - \sum_{j=1}^{n} \left|\left \langle \frac{h}{\|h\|}, \tau_j\right\rangle \right|^2\log \left|\left \langle \frac{h}{\|h\|}, \tau_j\right\rangle \right|^2\geq 0,
\end{align*}
where $\mathcal{H}_\tau\coloneqq \{h \in \mathcal{H}: \langle h , \tau_j \rangle \neq 0, \forall 1\leq j \leq n\}$ \cite{DEUTSCH}. 
\begin{theorem}\cite{DEUTSCH} (\textbf{Deutsch Entropic Uncertainty Principle})  \label{DU}
	Let $\{\tau_j\}_{j=1}^n$,  $\{\omega_j\}_{j=1}^n$ be two orthonormal bases for a  finite dimensional Hilbert space $\mathcal{H}$. Then 
	\begin{align*}
		2 \log n \geq S_\tau (h)+S_\omega (h)\geq -2 \log \left(\frac{1+\displaystyle \max_{1\leq j, k \leq n}|\langle\tau_j , \omega_k\rangle|}{2}\right)	\geq 0, \quad \mbox{for all } h \in \mathcal{H}_\tau \cap \mathcal{H}_\omega.
	\end{align*}
\end{theorem}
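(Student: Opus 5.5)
The plan is to reduce everything to a single pairwise inequality supplied by Buzano's inequality (\ref{BI}), and then average it over all pairs $(j,k)$ using concavity of the logarithm. Fix $h \in \mathcal{H}_\tau \cap \mathcal{H}_\omega$ and normalise $x\coloneqq h/\|h\|$. Write $p_j\coloneqq |\langle x, \tau_j\rangle|^2$ and $q_k\coloneqq |\langle x, \omega_k\rangle|^2$. By Parseval $\sum_j p_j=\sum_k q_k=1$, and every $p_j,q_k>0$ by the definition of $\mathcal{H}_\tau\cap\mathcal{H}_\omega$. Then
\begin{align*}
S_\tau(h)+S_\omega(h) = -\sum_{j=1}^n\sum_{k=1}^n p_jq_k \log (p_jq_k),
\end{align*}
using $\sum_k q_k=1$ and $\sum_j p_j=1$ to merge the two sums into one double sum.

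\textbf{Upper bound.} The family $\{p_jq_k\}_{j,k}$ is a probability distribution on $n^2$ points. The Shannon entropy of such a distribution is at most $\log(n^2)=2\log n$, by Jensen's inequality for the concave function $\log$:
\begin{align*}
\sum_{j,k} p_jq_k\log\frac{1}{p_jq_k}\leq \log \sum_{j,k}1 = 2\log n.
\end{align*}

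\textbf{Lower bound.} This is the key step. Apply (\ref{BI}) with $h=x$ (so $\|x\|=1$), $\tau=\tau_j$ and $\omega=\omega_k$:
\begin{align*}
\sqrt{p_jq_k}=|\langle \tau_j, x\rangle\langle x, \omega_k\rangle| \leq \frac{|\langle\tau_j,\omega_k\rangle|+1}{2}\leq \frac{1+M}{2},
\end{align*}
where $M\coloneqq \max_{j,k}|\langle \tau_j,\omega_k\rangle|$. It follows that $-\log(p_jq_k)\geq -2\log\frac{1+M}{2}$ for every pair $(j,k)$. Multiplying by the weights $p_jq_k$ and summing (the weights total $1$) gives the middle inequality.

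\textbf{Final inequality.} Cauchy--Schwarz gives $M\leq 1$, so $\frac{1+M}{2}\leq 1$ and hence $-2\log\frac{1+M}{2}\geq 0$.

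The only point needing care is the rewriting of $S_\tau+S_\omega$ as the entropy of the product distribution, since that identity is what allows the pairwise Buzano bound to be applied inside a single weighted sum. The bound itself is pointwise, so once that rewriting is in place no real obstacle remains.
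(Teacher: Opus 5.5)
Your proposal is correct and follows the same route as the paper's proof of the more general Theorem \ref{CDUP}: merge $S_\tau+S_\omega$ into a single double sum weighted by $p_jq_k$, apply the Buzano inequality (\ref{BI}) pointwise to get $\sqrt{p_jq_k}\leq \frac{1+M}{2}$, and use that the weights sum to $1$. The only cosmetic difference is the upper bound: you apply Jensen once to the product distribution on $n^2$ points, while the paper applies it to each entropy separately and adds the results to get $\log n+\log n$.
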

In this  article, we show that Buzano and Buzano-Dragomir inequalities hold for quaternionic Hilbert spaces. Using quaternionic Buzano inequality we derive quaternionic Deutsch entropic uncertainty principle. We end the introduction by recalling the definition of  quaternionic Hilbert spaces. Let  
\begin{align*}
	\mathbb{H}\coloneqq \{a+ib+jc+kd: a, b, c, d \in \mathbb{R}, i^2=j^2=k^2=ijk=-1\}
\end{align*}
be the skew-field of quaternion numbers with the conjugation
\begin{align*}
	\overline{a+ib+jc+kd}\coloneqq a-ib-jc-kd, \quad \forall a+ib+jc+kd \in 	\mathbb{H}
\end{align*}
and the modulus 
\begin{align*}
	|a+ib+jc+kd|\coloneqq \sqrt{a^2+b^2+c^2+d^2}, \quad \forall a+ib+jc+kd \in 	\mathbb{H}.	
\end{align*}
\begin{definition}\cite{GHILONIMORETTIPEROTTI}
	A right quaternionic  vector space 	 $\mathcal{Q}$  over $\mathbb{H}$ is said to be a (right)  \textbf{quaternionic Hilbert space} if there exists a  map (called as quaternionic inner product) $ \langle \cdot, \cdot \rangle: \mathcal{Q}\times \mathcal{Q} \to \mathbb{H}$ such that the following hold. 
	\begin{enumerate}[\upshape(i)]
		\item $\langle x, x \rangle  \geq 0$, $\forall x \in \mathcal{Q}$. If $x \in  \mathcal{Q}$ satisfies $\langle x, x \rangle  =0 $, then $x=0$.
		\item $\langle x+y, z \rangle  =\langle x, z \rangle+\langle y, z \rangle$, $\forall x,y,z \in \mathcal{Q}$.
		\item  $\langle x, yq \rangle  =\langle x, y \rangle q$, $\forall x,y \in \mathcal{Q}$, $\forall q \in \mathbb{H}$.
		\item $\langle x, y \rangle=\overline{\langle y,x \rangle}$, $\forall x,y \in \mathcal{Q}$.
		\item $\mathcal{Q}$ is complete w.r.t. the norm $\|x\|\coloneqq \sqrt{\langle x, x \rangle}$,  $\forall x \in \mathcal{Q}$.
	\end{enumerate}
\end{definition}

\section{Quaternionic Buzano-Dragomir Inequality and Deutsch Entropic Uncertainty Principle}
In the article,  $\mathcal{Q}$ denotes a right quaternionic Hilbert space. As in the tradition of quaternionic analysis, in the entire paper, given $x \in \mathcal{Q}$ and a  $r \in \mathbb{R}\setminus \{0\}$,  by $x/r$, we mean $x r^{-1}$. Note that this is meaningful only for division by non-zero reals and not for arbitrary quaternions as quaternions are not commutative. We first derive quaternionic version of Inequality (\ref{BI}). 
\begin{theorem}     (\textbf{Quaternionic Buzano Inequality})
	Let $\mathcal{Q}$ be a quaternionic Hilbert space. Then 
	\begin{align}\label{QBI}
	|\langle \tau, x\rangle \langle x, \omega\rangle |\leq \frac{\|x\|^2(|\langle \tau, \omega \rangle|+\|\tau\|\|\omega\|)}{2}, \quad \forall x, \tau, \omega \in \mathcal{Q}.
\end{align}
In other words, 
\begin{align*}
	\left|\left\langle \tau, \frac{x}{\|x\|}\right\rangle \left \langle \frac{x}{\|x\|}, \omega \right\rangle\right|\leq \frac{|\langle \tau, \omega \rangle|+\|\tau\|\|\omega\|}{2}\leq \|\tau\|\|\omega\|, \quad \forall  \tau, \omega \in \mathcal{Q}, \forall x \in \mathcal{Q}\setminus \{0\}.
\end{align*}
Further, we have following.
\begin{enumerate}[\upshape(i)]
	\item Let $\tau, \omega \in \mathcal{Q}\setminus\{0\}$ be such that $\langle \tau, \omega \rangle =0$. Choose $\alpha, \beta \in \mathbb{H}$ with $|\beta|=1$. Define 
	\begin{align*}
		x_{\alpha, \beta}\coloneqq \left(\frac{\tau}{\|\tau\|}+\frac{\omega}{\|\omega\|}\beta\right)\alpha.
	\end{align*}
	Then 
	\begin{align*}
		|\langle \tau, 	x_{\alpha, \beta}\rangle \langle 	x_{\alpha, \beta}, \omega\rangle |= \frac{\|	x_{\alpha, \beta}\|^2(|\langle \tau, \omega \rangle|+\|\tau\|\|\omega\|)}{2}.
	\end{align*}
	\item  Let $\tau, \omega \in \mathcal{Q}\setminus\{0\}$ be such that $\langle \tau, \omega \rangle \neq 0$. For $\alpha \in \mathbb{H}$, define 
	\begin{align*}
		x_{\alpha}\coloneqq  \left(\frac{\tau}{\|\tau\|}+\frac{\omega}{\|\omega\|}\frac{\langle \omega, \tau \rangle}{|\langle \omega, \tau \rangle |}\right)\alpha.
	\end{align*}
	Then 
	\begin{align*}
		|\langle \tau, 	x_{\alpha}\rangle \langle 	x_{\alpha}, \omega\rangle |= \frac{\|	x_{\alpha}\|^2(|\langle \tau, \omega \rangle|+\|\tau\|\|\omega\|)}{2}.
	\end{align*}
\end{enumerate}
\end{theorem}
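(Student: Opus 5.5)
The plan is to reduce the inequality to the quaternionic Cauchy--Schwarz inequality by means of a reflection operator, exactly as in the complex case, while keeping track of the order of quaternionic factors throughout. The case $x=0$ is trivial. By property (iii) we have $\|x r^{-1}\|=\|x\|\,|r|^{-1}$ for real $r\neq 0$, and both sides of (\ref{QBI}) scale by $\|x\|^2$. So it suffices to treat a unit vector $x$, i.e.\ $\|x\|=1$. The facts I will use are the following.
\begin{itemize}
\item The quaternionic Cauchy--Schwarz inequality $|\langle a,b\rangle|\le\|a\|\|b\|$, which is standard and can be cited from \cite{GHILONIMORETTIPEROTTI}.
\item The modulus is multiplicative and subadditive on $\mathbb{H}$.
\item $\|y\alpha\|=\|y\|\,|\alpha|$ for $\alpha\in\mathbb{H}$.
\item The sesquilinearity rules: $\langle a,b q\rangle=\langle a,b\rangle q$ from (iii), and $\langle a q,b\rangle=\overline{q}\langle a,b\rangle$, which follows from (iii) and (iv).
\end{itemize}

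\textbf{The inequality.} Define $P:\mathcal{Q}\to\mathcal{Q}$ by $Py\coloneqq x\langle x,y\rangle$. This map is additive, and it is right $\mathbb{H}$-linear because $P(yq)=x\langle x,y\rangle q=(Py)q$. By (iii),
\begin{align*}
\langle\tau,x\rangle\langle x,\omega\rangle=\langle \tau, x\langle x,\omega\rangle\rangle=\langle\tau,P\omega\rangle .
\end{align*}
Next we compute $\langle P\omega,\omega\rangle=\overline{\langle x,\omega\rangle}\langle x,\omega\rangle=|\langle x,\omega\rangle|^2$. We also have $\|P\omega\|^2=|\langle x,\omega\rangle|^2\|x\|^2$, which equals the same quantity since $\|x\|=1$. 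Expanding the norm of $2P\omega-\omega$ then gives
\begin{align*}
\|2P\omega-\omega\|^2=4\|P\omega\|^2-4\,\mathrm{Re}\langle P\omega,\omega\rangle+\|\omega\|^2=\|\omega\|^2 .
\end{align*}
Writing $2\langle\tau,P\omega\rangle=\langle\tau,(2P\omega-\omega)\rangle+\langle\tau,\omega\rangle$, the triangle inequality and Cauchy--Schwarz give
\begin{align*}
2|\langle\tau,P\omega\rangle|\le\|\tau\|\,\|\omega\|+|\langle\tau,\omega\rangle| .
\end{align*}
This is (\ref{QBI}). The second inequality, $\frac{|\langle \tau, \omega \rangle|+\|\tau\|\|\omega\|}{2}\le\|\tau\|\|\omega\|$, is again Cauchy--Schwarz.

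\textbf{Equality case (i).} Put $u=\tau/\|\tau\|$ and $v=\omega/\|\omega\|$; these are orthogonal unit vectors. Then
\begin{align*}
\langle\tau,x_{\alpha,\beta}\rangle=\|\tau\|\alpha,\qquad \langle x_{\alpha,\beta},\omega\rangle=\overline{\alpha}\,\overline{\beta}\,\|\omega\|,
\end{align*}
so the product has modulus $|\alpha|^2\|\tau\|\|\omega\|$. On the other side, $\|x_{\alpha,\beta}\|^2=2|\alpha|^2$, so the right-hand side equals $\frac{2|\alpha|^2\cdot\|\tau\|\|\omega\|}{2}=|\alpha|^2\|\tau\|\|\omega\|$, and equality holds.

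\textbf{Equality case (ii).} Set $c=|\langle\tau,\omega\rangle|$ and $q=\langle\omega,\tau\rangle/c$. Then $\langle\tau,\omega\rangle q=\overline{\langle\omega,\tau\rangle}\langle\omega,\tau\rangle/c=c$ is real, and $\overline q=\langle\tau,\omega\rangle/c$. From these,
\begin{align*}
\langle\tau,x_\alpha\rangle=\left(\|\tau\|+\frac{c}{\|\omega\|}\right)\alpha,\qquad \langle x_\alpha,\omega\rangle=\overline{\alpha}\,\langle\tau,\omega\rangle\left(\frac{1}{\|\tau\|}+\frac{\|\omega\|}{c}\right).
\end{align*}
The product therefore has modulus $|\alpha|^2(c+\|\tau\|\|\omega\|)^2/(\|\tau\|\|\omega\|)$. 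Expanding the norm gives
\begin{align*}
\|x_\alpha\|^2=|\alpha|^2\left(2+\frac{2c}{\|\tau\|\|\omega\|}\right),
\end{align*}
and substituting this into the right-hand side yields the same value, so equality holds.

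The main obstacle is purely bookkeeping under noncommutativity. One must check that $P$ is right-linear and that conjugates land on the correct side, in particular in $\langle yq,z\rangle=\overline q\langle y,z\rangle$ and in the phase $q$ chosen in (ii). The choice in (ii) is what makes $\langle\tau,\omega\rangle q$ real and positive; the conjugate phase on the wrong side would not work.
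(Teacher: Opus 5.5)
Your proof is correct and follows essentially the same route as the paper. The paper bounds $|2\langle \tau, x\rangle\langle x,\omega\rangle-\langle\tau,\omega\rangle\|x\|^2|$ by $\|\tau\|\,\|2x\langle x,\omega\rangle-\omega\|x\|^2\|$ and shows this vector has norm $\|\omega\|\|x\|^2$, which is your identity $\|2P\omega-\omega\|=\|\omega\|$ without the normalization $\|x\|=1$; your explicit checks of the equality cases (i) and (ii), including placing the phase $\langle\omega,\tau\rangle/|\langle\omega,\tau\rangle|$ on the right, are also correct and supply what the paper leaves as ``direct computation.''
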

\begin{proof}
Let $x, \tau, \omega \in \mathcal{Q}$. We first see 
	\begin{align*}
		|2\langle \tau, x \rangle \langle x, \omega \rangle -\langle \tau, \omega\rangle \|x\|^2| &=	|\langle \tau, 2x \langle x, \omega \rangle \rangle -\langle \tau, \omega\|x\|^2\rangle |\\
		&=|\langle \tau, 2x \langle x, \omega\rangle - \omega\|x\|^2\rangle |\\
		&\leq \|\tau\|\|2x \langle x, \omega\rangle - \omega\|x\|^2\|.
	\end{align*}
We now find 
\begin{align*}
	\|2x \langle x, \omega\rangle - \omega\|x\|^2\|^2&=\langle 2x \langle x, \omega\rangle - \omega\|x\|^2, 2x \langle x, \omega\rangle - \omega\|x\|^2\rangle \\
	&=\langle 2x \langle x, \omega\rangle , 2x \langle x, \omega\rangle \rangle-\langle 2x \langle x, \omega\rangle ,  \omega\|x\|^2\rangle-\langle \omega\|x\|^2, 2x \langle x, \omega\rangle\rangle+\langle \omega\|x\|^2, \omega\|x\|^2\rangle\\
	&=4\|x\|^2|\langle x, \omega\rangle|^2-2\|x\|^2|\langle x, \omega\rangle|^2-2\|x\|^2|\langle x, \omega\rangle|^2+\|\omega\|^2\|x\|^4\\
	&=\|\omega\|^2\|x\|^4.
\end{align*}
Hence 
\begin{align*}
	\|2x \langle x, \omega\rangle - \omega\|x\|^2\|=\|\omega\|\|x\|^2.
\end{align*}
Therefore 
\begin{align*}
	2|\langle \tau, x \rangle \langle x, \omega \rangle| -|\langle \tau, \omega\rangle \|x\|^2|&\leq 	|2\langle \tau, x \rangle \langle x, \omega \rangle -\langle \tau, \omega\rangle \|x\|^2|\\
	&\leq \|\tau\|\|\omega\|\|x\|^2.
\end{align*}
Other assertions in the statement come from direct computation. 	
\end{proof}
We now derive quaternionic version of Theorem \ref{DT}. Our proof is motivated from the proof by Dragomir \cite{DRAGOMIR2}.
\begin{theorem}     (\textbf{Quaternionic Buzano-Dragomir Inequality})
	Let $\mathcal{Q}$ be a quaternionic Hilbert space and  $P:\mathcal{Q}\to \mathcal{Q}$ be an orthogonal projection. Then 
	\begin{align}\label{BDI}
		|\langle P\tau, \omega\rangle |\leq \frac{|\langle \tau, \omega \rangle|+\|\tau\|\|\omega\|}{2}, \quad \forall \tau, \omega \in \mathcal{Q}.	
	\end{align}
\end{theorem}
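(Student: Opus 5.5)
The plan is to mimic the proof of the quaternionic Buzano inequality, replacing the rank-one projection $x\mapsto x\langle x,\cdot\rangle$ by $P$ and using the reflection $U\coloneqq 2P-I$. The key observation is that for an orthogonal projection $P$ on $\mathcal{Q}$ (right $\mathbb{H}$-linear, $P^2=P$, $\langle P\tau,\omega\rangle=\langle \tau,P\omega\rangle$), the operator $U$ is right $\mathbb{H}$-linear and isometric.

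First I verify that $\|(2P-I)\omega\|=\|\omega\|$ for all $\omega\in\mathcal{Q}$. Expanding as in the previous proof, using only additivity, the conjugate symmetry axiom and self-adjointness/idempotence of $P$, gives
\begin{align*}
\|2P\omega-\omega\|^2=4\langle P\omega,P\omega\rangle-2\langle P\omega,\omega\rangle-2\langle \omega,P\omega\rangle+\|\omega\|^2 .
\end{align*}
Since $\langle P\omega,P\omega\rangle=\langle P\omega,\omega\rangle=\langle \omega,P\omega\rangle=\|P\omega\|^2$, which is real, this reduces to $\|\omega\|^2$. No commutativity of quaternions is needed here, because every inner product appearing is real.

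Next I compute, using additivity of the inner product and self-adjointness of $P$,
\begin{align*}
|2\langle P\tau,\omega\rangle-\langle \tau,\omega\rangle|=|\langle \tau,2P\omega-\omega\rangle|\leq \|\tau\|\|2P\omega-\omega\|=\|\tau\|\|\omega\|.
\end{align*}
This step requires the Cauchy--Schwarz inequality in quaternionic Hilbert spaces, which is standard (and was already used in the proof of the quaternionic Buzano inequality). Finally, the reverse triangle inequality for the quaternionic modulus gives $2|\langle P\tau,\omega\rangle|-|\langle\tau,\omega\rangle|\leq\|\tau\|\|\omega\|$, which rearranges to Inequality (\ref{BDI}).

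The main point to check carefully is the isometry computation, in particular that the cross terms combine correctly. The inner product is right linear in the second variable and conjugate-linear on the right in the first, and real scalars such as $2$ are central, so they pass through freely. Since $P$ is self-adjoint and idempotent, each cross term equals the real number $\|P\omega\|^2$, so no ordering issue with quaternionic scalars arises. Thus I expect the argument to go through without any genuine obstruction beyond this bookkeeping.
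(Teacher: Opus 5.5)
Your proposal is correct and follows essentially the same route as the paper: show that $2P-I$ preserves norms, then combine Cauchy--Schwarz with the reverse triangle inequality for the quaternionic modulus. The only cosmetic difference is that you use self-adjointness to move $P$ into the second slot and apply $\|2P\omega-\omega\|=\|\omega\|$, whereas the paper works directly with $\langle 2P\tau-\tau,\omega\rangle$ and $\|2P\tau-\tau\|=\|\tau\|$.
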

\begin{proof}
	Let $I_\mathcal{Q}$ be the identity operator on $\mathcal{Q}$.	Since $P$ is an orthogonal projection, 
	\begin{align*}
		\|2P\tau-\tau\|&=\sqrt{\langle 2P\tau-\tau, 2P\tau-\tau \rangle}\\
		&=\sqrt{4\langle P\tau, \tau \rangle-2\langle P\tau, \tau \rangle-2\langle \tau, P\tau \rangle+\langle \tau, \tau \rangle}=\|\tau\|.
	\end{align*}
	Therefore 
	\begin{align*}
		|\langle 2P\tau, \omega \rangle|-|\langle \tau, \omega \rangle|&\leq |\langle 2P\tau, \omega \rangle-\langle \tau, \omega \rangle|=|\langle 2P\tau- \tau, \omega \rangle|\\
		&\leq 	\|2P\tau-\tau\|\|\omega\|=\|\tau\|\|\omega\|.
	\end{align*}	
\end{proof}
Note that we can give a direct proof of Inequality (\ref{BDI}) using Inequality (\ref{QBI}). For Hilbert spaces, this has been observed in 2022  by Sababheh, Moradi  and Heydarbeygi \cite{SMH}. Let  $\mathcal{Q}$ be a quaternionic Hilbert space and  $P:\mathcal{Q}\to \mathcal{Q}$ be an orthogonal projection. Let $\tau, \omega \in \mathcal{Q}$. Then using Inequality (\ref{QBI}) we get 
\begin{align*}
	|\langle P\tau, \omega \rangle |&=\frac{1}{\|P\tau\|^2}\|P\tau\|^2|\langle P\tau, \omega \rangle |=\frac{1}{\|P\tau\|^2}\langle P\tau, P\tau \rangle \langle P\tau, \omega \rangle |\\
	&=\frac{1}{\|P\tau\|^2}|\langle \tau, P\tau \rangle \langle P\tau, \omega \rangle |\leq \frac{1}{\|P\tau\|^2}\frac{\|P\tau\|^2(|\langle \tau, \omega \rangle|+\|\tau\|\|\omega\|)}{2}\\
	&=\frac{|\langle \tau, \omega \rangle|+\|\tau\|\|\omega\|}{2}, 
\end{align*}
which is Inequality (\ref{BDI}). We now derive Theorem \ref{DIG} for quaternionic Hilbert spaces. Our proof is motivated from the proof by Dragomir \cite{DRAGOMIR3}.
\begin{theorem}     (\textbf{Quaternionic Buzano-Dragomir Inequality})
	Let $\mathcal{Q}$ be a quaternionic Hilbert space and  $T:\mathcal{Q}\to \mathcal{Q}$ be a bounded linear operator. Then 
	\begin{align*}
		|\langle T\tau, T\omega\rangle |\leq \frac{\|T\|^2(|\langle \tau, \omega \rangle|+\|\tau\|\|\omega\|)}{2}, \quad \forall \tau, \omega \in \mathcal{Q}.	
	\end{align*}
\end{theorem}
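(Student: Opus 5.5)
Since $\langle T\tau, T\omega\rangle = \langle T^*T\tau, \omega\rangle$, the natural plan is to reduce the statement to a contraction-type inequality for the positive operator $A\coloneqq T^*T$. If $T=0$ there is nothing to prove, so assume $\|T\|>0$ and set $A\coloneqq T^*T/\|T\|^2$. Here $T^*$ is the adjoint of $T$ on $\mathcal{Q}$, defined by $\langle Tx, y\rangle=\langle x, T^*y\rangle$ for right $\mathbb{H}$-linear bounded $T$; its existence follows from the quaternionic Riesz representation theorem. The operator $A$ is self-adjoint and satisfies $0\leq \langle Ax, x\rangle \leq \|x\|^2$, so $\|A\|\leq 1$. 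The goal then becomes
\begin{align*}
|\langle A\tau, \omega\rangle|\leq \frac{|\langle \tau, \omega \rangle|+\|\tau\|\|\omega\|}{2}.
\end{align*}

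Following the proof of Inequality (\ref{BDI}), the key claim is $\|2A\tau-\tau\|\leq \|\tau\|$. Expanding with the quaternionic inner product, and using that $\langle A\tau,\tau\rangle$ is real (it equals $\|T\tau\|^2/\|T\|^2$), gives
\begin{align*}
\|2A\tau-\tau\|^2=4\|A\tau\|^2-4\langle A\tau,\tau\rangle+\|\tau\|^2 .
\end{align*}
So the claim reduces to $\|A\tau\|^2\leq \langle A\tau,\tau\rangle$. For this I use $\|A^{1/2}\|\leq 1$: writing $\|A\tau\|^2=\|A^{1/2}(A^{1/2}\tau)\|^2\leq \|A^{1/2}\tau\|^2=\langle A\tau,\tau\rangle$ establishes the inequality.

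The existence of the square root $A^{1/2}$ in the quaternionic setting (via the continuous functional calculus for self-adjoint operators on quaternionic Hilbert spaces, as in Ghiloni--Moretti--Perotti) is the main obstacle. To avoid it, I would use the operator inequality $A^2\leq A$ directly. Since $0\leq A\leq I$ and $A$ commutes with $I-A$, we have $\langle A(I-A)\tau,\tau\rangle\geq 0$. One way to see this without square roots is the generalized Cauchy--Schwarz inequality $|\langle Ax,y\rangle|^2\leq \langle Ax,x\rangle\langle Ay,y\rangle$ for positive $A$. This inequality follows from the usual discriminant argument applied to the real-valued quadratic form $t\mapsto \langle A(x+yq t),x+yq t\rangle$, with $q$ a unit quaternion chosen so that $\langle Ax, y\rangle q$ is real. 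Putting $y=A x$ gives $\|Ax\|^4\leq \langle Ax,x\rangle\langle A^2x,Ax\rangle\leq \langle Ax,x\rangle\|Ax\|^2$, hence $\|Ax\|^2\leq\langle Ax,x\rangle$, with no functional calculus needed.

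With $\|2A\tau-\tau\|\leq\|\tau\|$ in hand, the end is identical to the proof of Inequality (\ref{BDI}):
\begin{align*}
2|\langle A\tau,\omega\rangle|-|\langle\tau,\omega\rangle|\leq|\langle 2A\tau-\tau,\omega\rangle|\leq \|2A\tau-\tau\|\|\omega\|\leq\|\tau\|\|\omega\|.
\end{align*}
Multiplying by $\|T\|^2/2$ and substituting $\langle A\tau,\omega\rangle=\langle T\tau,T\omega\rangle/\|T\|^2$ then gives the stated inequality.
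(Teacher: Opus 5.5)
Your proof is correct, but it takes a different route from the paper's. The paper never introduces $T^*$. It applies the Cauchy--Schwarz inequality to the defect semi-inner product $[x,y]=\langle x,y\rangle\|T\|^2-\langle Tx,Ty\rangle$, which is positive because $\|Tx\|\le\|T\|\|x\|$. It then uses the elementary identity $(bd-ac)^2-(b^2-a^2)(d^2-c^2)=(bc-ad)^2\ge 0$ to reach the intermediate bound $\bigl|\langle\tau,\omega\rangle\|T\|^2-\langle T\tau,T\omega\rangle\bigr|\le\|T\|^2\|\tau\|\|\omega\|-\|T\tau\|\|T\omega\|$. It finishes with the triangle inequality and $|\langle T\tau,T\omega\rangle|\le\|T\tau\|\|T\omega\|$.

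You instead pass to the positive operator $A=T^*T/\|T\|^2$ and prove $\|Ax\|^2\le\langle Ax,x\rangle$ by the generalized Cauchy--Schwarz inequality for the form $\langle A\cdot,\cdot\rangle$. You verify that inequality correctly in the quaternionic setting: the chosen unit quaternion $q$ makes the cross terms equal to $2t|\langle Ax,y\rangle|$, and $\bar q\langle Ay,y\rangle q=\langle Ay,y\rangle$ because $\langle Ay,y\rangle$ is real. From this you deduce $\|2A\tau-\tau\|\le\|\tau\|$ and then reuse the projection argument for Inequality (\ref{BDI}) verbatim. Your stated $\|A\|\le 1$ is never actually needed; only $\langle Ay,y\rangle\le\|y\|^2$ is used.

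Your route costs the quaternionic Riesz representation theorem (to get $T^*$). In return it gives a unified picture: the only property of $P$ used in the earlier theorem is $\|2P-I\|\le 1$. Your argument proves $|\langle A\tau,\omega\rangle|\le\bigl(|\langle\tau,\omega\rangle|+\|\tau\|\|\omega\|\bigr)/2$ for every self-adjoint $A$ with $0\le\langle Ax,x\rangle\le\|x\|^2$, and both Buzano--Dragomir theorems are special cases of this. The paper's route is adjoint-free, and it produces a different intermediate inequality that retains the term $\|T\tau\|\|T\omega\|$.
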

\begin{proof}
	Let $T:\mathcal{Q}\to \mathcal{Q}$ be a bounded linear operator and $\tau, \omega \in \mathcal{Q}$. Define 
	\begin{align*}
		[\cdot, \cdot]:  \mathcal{Q} \times  \mathcal{Q} \ni (x,y)\mapsto [x,y]\coloneqq \langle x, y \rangle \|T\|^2-\langle Tx, Ty \rangle \in \mathbb{H}.
	\end{align*}
Then $	[\cdot, \cdot]$ is a semi-inner  product. By applying Cauchy-Schwarz inequality to this semi-inner product, we get 
\begin{align*}
	|[x,y]|^2\leq [x, x][y,y], \quad \forall x, y \in \mathcal{Q}.
\end{align*}
In particular, 
\begin{align*}
		&\left|[\tau, \omega]\right|^2\leq [\tau, \tau][\omega, \omega].
	\end{align*}
Hence 
\begin{align*}
		 \big|\langle \tau, \omega \rangle \|T\|^2-\langle T\tau, T\omega \rangle\big|^2 \leq (\|\tau\|^2\|T\|^2-\|T\tau\|^2)(\|\omega\|^2\|T\|^2-\|T\omega\|^2).
\end{align*}
Note that we have 
\begin{align*}
	(b^2-a^2)(d^2-c^2)\leq (bd-ac)^2, \quad \forall a, b, c, d \in [0,\infty).
\end{align*}
Hence 
\begin{align*}
	\big|\langle \tau, \omega \rangle \|T\|^2-\langle T\tau, T\omega \rangle\big|^2 &\leq (\|\tau\|^2\|T\|^2-\|T\tau\|^2)(\|\omega\|^2\|T\|^2-\|T\omega\|^2)\\
	&\leq (\|\tau\|\|T\|\|\omega\|\|T\|-\|T\tau\|\|T\omega\|)^2.
\end{align*}
Taking square root in the previous equation, 
\begin{align*}
	\big|\langle \tau, \omega \rangle \|T\|^2-\langle T\tau, T\omega \rangle\big| &\leq|\|\tau\|\|T\|\|\omega\|\|T\|-\|T\tau\|\|T\omega\||\\
	&=\|\tau\|\|T\|^2\|\omega\|-\|T\tau\|\|T\omega\|.
\end{align*}
Using triangle inequality and Cauchy-Schwarz inequality, we get 
\begin{align*}
		|\langle T\tau, T\omega \rangle|-|\langle \tau, \omega \rangle| \|T\|^2\leq \|\tau\|\|T\|^2\|\omega\|-\|T\tau\|\|T\omega\|\leq \|\tau\|\|T\|^2\|\omega\|-|\langle T\tau, T\omega \rangle|.
\end{align*}
Therefore 
\begin{align*}
	2 	|\langle T\tau, T\omega \rangle|\leq \|T\|^2\|\tau\|\|\omega\|+|\langle \tau, \omega \rangle|\|T\|^2.
\end{align*}
\end{proof}
Similar to Hilbert space case, the \textbf{quaternionic numerical range} \cite{YEUNG, KIPPENHAHN, SO, SOTHOMSON, ZHANG, SOTHOMPSONZHANG, THOMPSON, CARVALHODIOGOMENDES, CARVALHODIOGOMENDES2, CARVALHODIOGOMENDES3, SO2, YEUNGSIU}
of a quaternionic bounded linear operator $T:\mathcal{Q} \to \mathcal{Q} $ is defined as 
\begin{align*}
	W_\mathbb{H}(T)\coloneqq \{\langle x,Tx\rangle: x \in \mathcal{Q}, \|x\|=1\}
\end{align*}
and the \textbf{quaternionic numerical radius} of  $T$ is defined as 
\begin{align*}
	w_\mathbb{H}(T)\coloneqq \sup_{x \in \mathcal{Q}, \|x\|=1}|\langle Tx, x\rangle|.
\end{align*}
For  $ \tau, \omega \in \mathcal{Q}$, define the rank one operator  
\begin{align*}
	\tau \otimes \omega : \mathcal{Q} \ni x \mapsto (\tau \otimes \omega)x \coloneqq \tau \langle \omega, x \rangle \in \mathcal{Q}.
\end{align*}
Using Inequality (\ref{QBI}), we  have the following theorem. 
\begin{theorem} 
	Let $\mathcal{Q}$ be a quaternionic  Hilbert space and $ \tau, \omega \in \mathcal{Q}$. Then 
	\begin{align}\label{RE}
		w_\mathbb{H}(\tau \otimes \omega)=\frac{|\langle \tau, \omega \rangle|+\|\tau\|\|\omega\|}{2}.
	\end{align}
\end{theorem}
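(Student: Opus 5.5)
We prove the two inequalities separately. For the upper bound, fix $x \in \mathcal{Q}$ with $\|x\|=1$. By the definition of the rank one operator, $(\tau\otimes\omega)x=\tau\langle \omega, x\rangle$. Therefore
\begin{align*}
\langle (\tau\otimes \omega)x, x\rangle=\langle \tau\langle \omega,x\rangle, x\rangle=\overline{\langle x, \tau\langle \omega, x\rangle\rangle}=\overline{\langle x,\tau\rangle\langle \omega,x\rangle}.
\end{align*}
Here we used axioms (iii) and (iv). The modulus of the quaternions is multiplicative and invariant under conjugation, and $|\langle x,\tau\rangle|=|\langle \tau,x\rangle|$. Hence
\begin{align*}
|\langle (\tau\otimes\omega)x,x\rangle|=|\langle \tau, x\rangle\langle x,\omega\rangle| .
\end{align*}
(The factor $|\langle \omega,x\rangle|$ equals $|\langle x,\omega\rangle|$.) Inequality (\ref{QBI}) with $\|x\|=1$ bounds this by $\frac{|\langle \tau,\omega\rangle|+\|\tau\|\|\omega\|}{2}$. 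Taking the supremum over unit vectors gives ``$\leq$'' in (\ref{RE}).

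For the reverse inequality we exhibit unit vectors attaining the bound, using the equality cases (i) and (ii) of the Quaternionic Buzano Inequality. If $\tau=0$ or $\omega=0$, both sides of (\ref{RE}) are $0$ and there is nothing to prove. Assume both are nonzero.
\begin{enumerate}[\upshape(i)]
\item If $\langle \tau,\omega\rangle=0$, take $\beta=1$ and $x=x_{1,1}/\|x_{1,1}\|$. Note that $x_{1,1}\neq 0$, since $\tau$ and $\omega$ are orthogonal and nonzero.
\item If $\langle \tau,\omega\rangle\neq 0$, take $x=x_1/\|x_1\|$.
\end{enumerate}
The vector $x_1$ is nonzero. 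Indeed, $\|x_1\|^2=2+2\,\mathrm{Re}(\cdot)$, and by the equality statement the product $|\langle\tau,x_1\rangle\langle x_1,\omega\rangle|$ is positive. More directly, if $x_1=0$, then $\tau/\|\tau\|=-(\omega/\|\omega\|)u$ with $|u|=1$, and this forces an incompatibility of the phase. Failing that, we simply compute $\|x_1\|^2=2+2|\langle\tau,\omega\rangle|/(\|\tau\|\|\omega\|)>0$.

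Scaling by the real number $\|x\|^{-1}$ preserves the equality, since both sides of the equality are homogeneous of degree $2$ under real scaling. So $|\langle \tau,x\rangle\langle x,\omega\rangle|$ equals the right side of (\ref{RE}) for a unit vector $x$. The supremum is therefore attained, and we get equality.

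The main obstacle is the careful bookkeeping with noncommutativity in the first step. One must place the scalar $\langle\omega,x\rangle$ on the right and apply conjugate symmetry correctly. It also matters that the numerical radius is defined using $\langle Tx,x\rangle$ rather than $\langle x,Tx\rangle$, but only the modulus enters, so this is harmless. The second point to check is the norm computation $\|x_1\|^2=2+2|\langle \tau,\omega\rangle|/(\|\tau\|\|\omega\|)$. This relies on
\begin{align*}
\left\langle \tau, \omega\frac{\langle\omega,\tau\rangle}{|\langle\omega,\tau\rangle|}\right\rangle=\frac{|\langle\tau,\omega\rangle|^2}{|\langle\omega,\tau\rangle|},
\end{align*}
which holds because $\langle\tau,\omega\rangle\langle\omega,\tau\rangle=|\langle\tau,\omega\rangle|^2$.
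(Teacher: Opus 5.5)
Your proof is correct and follows essentially the paper's route: the upper bound comes from the Quaternionic Buzano Inequality after rewriting $|\langle(\tau\otimes\omega)x,x\rangle|=|\langle\tau,x\rangle\langle x,\omega\rangle|$, and attainment comes from the unit vectors $x_{1,1}/\sqrt{2}$ and $x_1/\|x_1\|$, which are exactly the paper's $y$ and $z$ (the paper computes the values at these vectors directly rather than citing the equality cases (i)--(ii), which it only justifies by ``direct computation''). In your nonvanishing paragraph only the final computation $\|x_1\|^2=2+2|\langle\tau,\omega\rangle|/(\|\tau\|\|\omega\|)>0$ is needed, since the first justification there is circular as stated and the ``phase'' argument is only a sketch, so you should simply drop them.
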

\begin{proof}
Let $ \tau, \omega \in \mathcal{Q}$. Let $x \in \mathcal{Q}$	with $\|x\|=1$. Then 
\begin{align*}
	|\langle (\tau \otimes \omega)x, x \rangle| =|\langle \tau \langle \omega, x \rangle, x \rangle |=|\langle x, \omega \rangle \langle \tau, x \rangle |\leq \frac{|\langle \tau, \omega \rangle|+\|\tau\|\|\omega\|}{2}.
\end{align*}
Therefore 
\begin{align*}
		w_\mathbb{H}(\tau \otimes \omega)\leq \frac{|\langle \tau, \omega \rangle|+\|\tau\|\|\omega\|}{2}.
\end{align*}
We next need to show the equality condition in (\ref{RE}). We can assume that $\tau \neq 0$ and $\omega \neq 0$. We have to consider two cases.\\
 Case (i): $\langle \tau, \omega \rangle =0$. Define 
\begin{align*}
	y \coloneqq \frac{1}{\sqrt{2}}\left(\frac{\tau}{\|\tau\|}+\frac{\omega}{\|\omega\|}\right). 
\end{align*}
Then $\|y\|=1$ and 
\begin{align*}
	|\langle (\tau \otimes \omega)y, y \rangle| &=|\langle \tau \langle \omega, y \rangle, y \rangle |=|\langle y, \omega \rangle|| \langle \tau, y \rangle |\\
	&=\frac{1}{2}\left|\left \langle \frac{\tau}{\|\tau\|}+\frac{\omega}{\|\omega\|}, \omega \right \rangle \right|\left|\left \langle \tau, \frac{\tau}{\|\tau\|}+\frac{\omega}{\|\omega\|} \right \rangle \right|\\
	&=\frac{1}{2}\|\omega\|\|\tau\|=\frac{0+\|\tau\|\|\omega\|}{2}=\frac{|\langle \tau, \omega \rangle|+\|\tau\|\|\omega\|}{2}.
\end{align*}
Case (ii): $\langle \tau, \omega \rangle \neq 0$. Define 
\begin{align*}
	z \coloneqq \sqrt{\frac{\|\tau\|\|\omega\|}{2(|\langle \tau, \omega \rangle|+\|\tau\|\|\omega\|)}}\left(\frac{\tau}{\|\tau\|}+\frac{\omega \langle \omega, \tau \rangle }{\|\omega\||\langle \omega, \tau \rangle|}\right). 
\end{align*}
Then 
\begin{align*}
	\|z\|^2&=\frac{\|\tau\|\|\omega\|}{2 (|\langle \tau, \omega \rangle|+\|\tau\|\|\omega\|)}\left\|\frac{\tau}{\|\tau\|}+\frac{\omega \langle \omega, \tau \rangle }{\|\omega\||\langle \omega, \tau \rangle|}\right\|^2\\
	&=\frac{\|\tau\|\|\omega\|}{2 (|\langle \tau, \omega \rangle|+\|\tau\|\|\omega\|)}\left\langle \frac{\tau}{\|\tau\|}+\frac{\omega \langle \omega, \tau \rangle }{\|\omega\||\langle \omega, \tau \rangle|}, \frac{\tau}{\|\tau\|}+\frac{\omega \langle \omega, \tau \rangle }{\|\omega\||\langle \omega, \tau \rangle|}\right \rangle \\
	&=\frac{\|\tau\|\|\omega\|}{2(|\langle \tau, \omega \rangle|+\|\tau\|\|\omega\|)}\left(2+2\frac{|\langle \tau, \omega \rangle |}{\|\tau\|\|\omega\|}\right)=1
\end{align*}
 and 
\begin{align*}
	|\langle (\tau \otimes \omega)z, z \rangle| &=|\langle \tau \langle \omega, z \rangle, z \rangle |=|\langle z, \omega \rangle|| \langle \tau, z \rangle |\\
	&=\frac{\|\tau\|\|\omega\|}{2 (|\langle \tau, \omega \rangle|+\|\tau\|\|\omega\|)}\left|\left \langle \frac{\tau}{\|\tau\|}+\frac{\omega\langle \omega, \tau \rangle }{\|\omega\||\langle \omega, \tau \rangle |}, \omega \right \rangle \right|\left|\left \langle \tau, \frac{\tau}{\|\tau\|}+\frac{\omega\langle \omega, \tau \rangle}{\|\omega\||\langle \omega, \tau \rangle |} \right \rangle \right|\\
	&=\frac{\|\tau\|\|\omega\|}{2 (|\langle \tau, \omega \rangle|+\|\tau\|\|\omega\|)}\left|\frac{\langle \tau, \omega \rangle}{\|\tau\|}+\frac{\langle \tau, \omega \rangle \|\omega \|}{|\langle \omega, \tau \rangle| }\right|\left|\|\tau\|+\frac{|\langle \omega, \tau \rangle |}{\|\omega\|}\right|\\
	&=\left(\frac{\|\tau\|\|\omega\|}{2 (|\langle \tau, \omega \rangle|+\|\tau\|\|\omega\|)}\right)|\langle \tau, \omega \rangle |\left(\frac{1}{\|\tau\|}+\frac{\|\omega\|}{|\langle \omega, \tau \rangle |}\right)\left(\frac{\|\tau\|\|\omega\|+|\langle \omega, \tau \rangle|}{\|\omega\|}\right)\\
	&=\left(\frac{\|\tau\|\|\omega\|}{2 (|\langle \tau, \omega \rangle|+\|\tau\|\|\omega\|)}\right)|\langle \tau, \omega \rangle |\left(\frac{|\langle \omega, \tau \rangle |+\|\tau\|\|\omega\|}{\|\tau\||\langle \omega, \tau\rangle|}\right)\left(\frac{\|\tau\|\|\omega\|+|\langle \omega, \tau \rangle|}{\|\omega\|}\right)\\
	&=\frac{|\langle \tau, \omega \rangle|+\|\tau\|\|\omega\|}{2}.
\end{align*}
\end{proof}
Let $p(z)\coloneqq a_0+a_1z+\cdots +a_{n-1}z^{n-1}+z^n \in \mathbb{H}[z]$. A direct observation reveals that the zeros of $p$ are the left (also right) eigenvalues of the Frobenius companion matrix 
\begin{align*}
	C_p\coloneqq 
	\begin{pmatrix}
		-a_{n-1}&-a_{n-2}&-a_{n-3}&\cdots &-a_2 &-a_1&-a_0\\
		1&0&0&\cdots &0 &0&0\\
		0&1&0&\cdots &0 &0&0\\
		\vdots &\vdots &\vdots & & \vdots &\vdots &\vdots\\
		0&0&0&\cdots &0 &0&0\\
		0&0&0&\cdots &1 &0&0\\
		0&0&0&\cdots &0 &1&0\\
	\end{pmatrix} \in \mathbb{M}_n(\mathbb{H})
\end{align*}
and the left (also right) eigenvalues of $C_p$ are the same as the zeros of $p$. Let $R$ be the right shift matrix defined by 
\begin{align*}
	R\coloneqq 
	\begin{pmatrix}
		0&0&0&\cdots &0 &0&0\\
		1&0&0&\cdots &0 &0&0\\
		0&1&0&\cdots &0 &0&0\\
		\vdots &\vdots &\vdots & & \vdots &\vdots &\vdots\\
		0&0&0&\cdots &0 &0&0\\
		0&0&0&\cdots &1 &0&0\\
		0&0&0&\cdots &0 &1&0\\
	\end{pmatrix} \in \mathbb{M}_n(\mathbb{H}).
\end{align*}
Define 
\begin{align*}
	{\bf{a}}\coloneqq \begin{pmatrix}
		\overline{a_{n-1}}\\
		\overline{a_{n-2}}\\
		\overline{a_{n-3}}\\
		\vdots \\
		\overline{a_{2}}\\
		\overline{a_{1}}\\
		\overline{a_{0}}\\
	\end{pmatrix}, \quad 
	{\bf{e_1}} \coloneqq \begin{pmatrix}
		1\\
		0\\
		0\\
		\vdots \\
		0\\
		0\\
		0\\
	\end{pmatrix}. 
\end{align*}
Then 
\begin{align}\label{QFK}
	C_p=R-e_1\otimes {\bf{a}}.
\end{align}

\begin{theorem} 
	Let $p(z)\coloneqq a_0+a_1z+\cdots +a_{n-1}z^{n-1}+z^n \in \mathbb{H}[z]$.  If $\lambda$ is a  zero of $p$, then 
	\begin{align*}
		|\lambda|\leq \frac{w_\mathbb{H}(R)+\sqrt{\sum_{j=0}^{n-1}|a_j|^2}+|a_{n-1}|}{2}.
	\end{align*}
\end{theorem}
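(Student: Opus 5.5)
The plan is to run the Fujii--Kubo argument in the quaternionic setting, using the decomposition (\ref{QFK}) and the quaternionic Fujii--Kubo equality (\ref{RE}). First I would take a zero $\lambda$ of $p$. By the remark preceding the statement, $\lambda$ is a right eigenvalue of $C_p$. So there is $x\in\mathbb{H}^n$ with $\|x\|=1$ and $C_px=x\lambda$. Then
\begin{align*}
|\lambda|=|\langle x, x\rangle\lambda|=|\langle x, x\lambda\rangle|=|\langle x, C_px\rangle|=|\langle C_px, x\rangle|\leq w_\mathbb{H}(C_p).
\end{align*}
The second step is to use (\ref{QFK}), $C_p=R-e_1\otimes {\bf a}$, together with subadditivity of $w_\mathbb{H}$. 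Subadditivity follows at once from the triangle inequality for the quaternionic modulus. This gives $w_\mathbb{H}(C_p)\leq w_\mathbb{H}(R)+w_\mathbb{H}(e_1\otimes{\bf a})$. The third step applies (\ref{RE}) with $\tau=e_1$ and $\omega={\bf a}$. Since $\|e_1\|=1$, $\|{\bf a}\|=\sqrt{\sum_{j=0}^{n-1}|a_j|^2}$ and $|\langle e_1,{\bf a}\rangle|=|a_{n-1}|$, this yields
\begin{align*}
|\lambda|\leq w_\mathbb{H}(R)+\frac{\sqrt{\sum_{j=0}^{n-1}|a_j|^2}+|a_{n-1}|}{2}.
\end{align*}

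The main obstacle is the last step. The displayed statement puts $w_\mathbb{H}(R)$ inside the halving, and the bound above does not give that. Closing the gap would require an estimate of the form $w_\mathbb{H}(R)\leq \tfrac12 w_\mathbb{H}(R)$, which is impossible since $w_\mathbb{H}(R)>0$ for $n\geq 2$. My first attempt would be to look for a sharper joint estimate of $w_\mathbb{H}(R-e_1\otimes{\bf a})$, rather than splitting the two terms. Before doing that, I would test the statement on a simple example, and the example settles the question.

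Take $n=2$ and $p(z)=z^2-1\in\mathbb{R}[z]\subset\mathbb{H}[z]$, so $a_1=0$, $a_0=-1$ and $\lambda=1$ is a zero. Here $R=\begin{pmatrix}0&0\\1&0\end{pmatrix}$ and
\begin{align*}
w_\mathbb{H}(R)=\sup_{\|x\|=1}|\overline{x_2}x_1|=\tfrac12 .
\end{align*}
The right-hand side of the statement is then $(\tfrac12+1+0)/2=\tfrac34<1=|\lambda|$. So the inequality as displayed fails.

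The argument above therefore establishes the bound with $w_\mathbb{H}(R)$ outside the fraction, matching (\ref{FNR}). In the statement as worded, the placement of $w_\mathbb{H}(R)$ inside the fraction appears to be a typographical slip, and that version cannot be proved.
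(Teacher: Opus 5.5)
Your argument is the paper's own proof: $|\lambda|\le w_\mathbb{H}(C_p)\le w_\mathbb{H}(R)+w_\mathbb{H}(e_1\otimes\mathbf{a})$, followed by the quaternionic Fujii--Kubo bound. Your right eigenvector $C_px=x\lambda$ even makes $|\lambda|=|\langle C_px,x\rangle|$ immediate from axiom (iii). Your diagnosis is also correct: the paper's last line silently rewrites $w_\mathbb{H}(R)+\frac{\sqrt{\sum_{j}|a_j|^2}+|a_{n-1}|}{2}$ as $\frac{w_\mathbb{H}(R)+\sqrt{\sum_{j}|a_j|^2}+|a_{n-1}|}{2}$, which is an arithmetic slip.

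Your example $p(z)=z^2-1$ is valid. There $w_\mathbb{H}(R)=\tfrac12$, so the displayed bound gives $\tfrac34<1=|\lambda|$, while the corrected bound gives exactly $1$. So the statement as displayed is false, and what the argument actually proves is the version with $w_\mathbb{H}(R)$ outside the fraction, matching (\ref{FNR}).
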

\begin{proof}
Let $\lambda$ be a  zero of $p$. Then $\lambda$ is a left  eigenvalue of $C_p$.  Let $x \in \mathbb{H}^n$ be such that $C_px=\lambda x$ and $\|x\|=1$. Using (\ref{QFK}), we get 
	\begin{align*}
	|\lambda|\leq w_\mathbb{H}(C_p)&=w_\mathbb{H}(R-e_1\otimes 	{\bf{a}}) \\
	&\leq w_\mathbb{H}(R)+w_\mathbb{H}(e_1\otimes {\bf{a}})\\
	&=\frac{w_\mathbb{H}(R)+\sqrt{\sum_{j=0}^{n-1}|a_j|^2}+|a_{n-1}|}{2}.
	\end{align*}
\end{proof}
Equality term in (\ref{FNR}) comes from the equality of numerical radius of right shift matrix obtained by Davidson and Holbrook \cite{DAVIDSONHOLBROOK} (also see \cite{HAAGERUPHARPE, KARAEV}). We are unable to do this for the right shift matrix over quaternions. 
We now derive Theorem \ref{DIT} over quaternions.
\begin{theorem} (\textbf{Quaternionic Dragomir Numerical Radius Inequality}) Let $\mathcal{Q}$ be a quaternionic Hilbert space and  $T:\mathcal{Q} \to \mathcal{Q} $ be a bounded linear operator. Then
	\begin{align*}
		w_\mathbb{H}(T) \leq \frac{1}{\sqrt{2}}\sqrt{w_\mathbb{H}(T^2)+\|T\|^2}.
	\end{align*}
\end{theorem}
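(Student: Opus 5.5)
We follow Dragomir's original argument, replacing the complex Buzano inequality by the Quaternionic Buzano Inequality (\ref{QBI}). Fix $x \in \mathcal{Q}$ with $\|x\|=1$. The aim is to bound $|\langle Tx, x\rangle|^2$ by $\frac{1}{2}\bigl(|\langle T^2x,x\rangle| + \|T\|^2\bigr)$ and then take the supremum over unit vectors $x$.

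The key step is to write $|\langle Tx,x\rangle|^2$ as the modulus of a product of the form $\langle \tau, x\rangle\langle x, \omega\rangle$. Since the quaternionic modulus is multiplicative and $|\bar q|=|q|$, we have
\begin{align*}
|\langle Tx, x\rangle|^2=|\langle Tx, x\rangle|\,|\langle x, T^*x\rangle| = |\langle Tx, x\rangle \langle x, T^*x\rangle|,
\end{align*}
where $T^*$ is the adjoint of $T$. This uses $\langle x, T^*x\rangle = \langle Tx, x\rangle$, which holds in quaternionic Hilbert spaces via the defining relation $\langle Ty, z\rangle=\langle y, T^*z\rangle$. Multiplicativity of the modulus means the noncommutativity of $\mathbb{H}$ causes no trouble here.

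Next we apply (\ref{QBI}) with $\tau = Tx$ and $\omega = T^*x$, using $\|x\|=1$:
\begin{align*}
|\langle Tx, x\rangle|^2 \leq \frac{|\langle Tx, T^*x\rangle| + \|Tx\|\|T^*x\|}{2}.
\end{align*}
For the two terms on the right:
\begin{itemize}
\item $\langle Tx, T^*x\rangle = \langle T^2x, x\rangle$, so its modulus is at most $w_\mathbb{H}(T^2)$. The conjugation convention is harmless because only moduli matter.
\item $\|Tx\|\|T^*x\|\leq \|T\|\|T^*\| = \|T\|^2$.
\end{itemize}
Taking the supremum over unit $x$ and then square roots gives the stated inequality. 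The bound $\frac{1}{\sqrt2}\sqrt{w_\mathbb{H}(T^2)+\|T\|^2}\le\|T\|$ also follows, since $w_\mathbb{H}(T^2)\le\|T^2\|\le\|T\|^2$.

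The main point requiring care is the existence and basic properties of the adjoint on a quaternionic Hilbert space. We need that $T^*$ exists as a bounded right-linear operator (via the quaternionic Riesz representation theorem, as in \cite{GHILONIMORETTIPEROTTI}), that $\|T^*\|=\|T\|$, and that the adjoint relation holds with the paper's convention of right-linearity in the second slot. If we prefer to avoid $\|T^*\|=\|T\|$, we can instead bound $\|T^*x\|^2=\langle TT^*x,x\rangle\le\|T\|\|T^*x\|$, which gives $\|T^*x\|\le\|T\|$ directly.
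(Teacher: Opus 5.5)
Your proposal is correct and matches the paper's proof step for step: write $|\langle Tx,x\rangle|^2=|\langle Tx,x\rangle\langle x,T^*x\rangle|$, apply the quaternionic Buzano inequality with $\tau=Tx$ and $\omega=T^*x$, use $\langle Tx,T^*x\rangle=\langle T^2x,x\rangle$ and $\|Tx\|\|T^*x\|\le\|T\|\|T^*\|=\|T\|^2$, then take the supremum over unit $x$. Your alternative derivation of $\|T^*x\|\le\|T\|$ from $\|T^*x\|^2=\langle TT^*x,x\rangle$ is a small useful addition, since the paper uses $\|T^*\|=\|T\|$ without justification.
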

\begin{proof}
	Let $x\in  \mathcal{Q}$ with $\|x\|=1$. Then using inequality (\ref{QBI}), we get 
	\begin{align*}
	|\langle Tx, x\rangle|^2&=|\langle Tx, x\rangle \langle x, T^*x\rangle|\leq \frac{\|x\|^2(|\langle Tx, T^*x \rangle|+\|Tx\|\|T^*x\|)}{2}\\
	&= \frac{|\langle T^2x, x \rangle|+\|Tx\|\|T^*x\|}{2}\leq  \frac{|\langle T^2x, x \rangle|+\|T\|\|x\|\|T^*\|\|x\|}{2}\\
	&=\frac{|\langle T^2x, x \rangle|+\|T\|\|T^*\|}{2}=\frac{|\langle T^2x, x \rangle|+\|T\|^2}{2}\\
	&\leq \frac{\sup_{y \in \mathcal{Q}, \|y\|=1}|\langle T^2y, y \rangle|+\|T\|^2}{2}=\frac{w_\mathbb{H}(T^2)+\|T\|^2}{2}.
	\end{align*}
Therefore 
\begin{align*}
		w_\mathbb{H}(T)= \sup_{x \in \mathcal{Q}, \|x\|=1}|\langle Tx, x\rangle|\leq \frac{1}{\sqrt{2}}\sqrt{w_\mathbb{H}(T^2)+\|T\|^2}.
\end{align*}
\end{proof}

We next derive quaternionic version of Deutsch entropic uncertainty principle. We derive it for even continuous frames which are more general than orthonormal bases. 
\begin{definition} \cite{KHOKULANTHIRULOGASANTHARMURALEETHARAN, KHOKULANTHIRULOGASANTHAR}
	Let 	$(\Omega, \mu)$ be a measure space. A collection   $\{\tau_\alpha\}_{\alpha\in \Omega}$ in 	a  quaternionic Hilbert  space $\mathcal{Q}$ is said to be a \textbf{continuous Parseval frame}  for $\mathcal{Q}$ if the following conditions hold.
	\begin{enumerate}[\upshape(i)]
		\item For each $x \in \mathcal{Q}$, the map $\Omega \ni \alpha \mapsto \langle  \tau_\alpha, x \rangle \in \mathbb{H}$ is measurable.
		\item 
		\begin{align*}
			\|x\|^2=\int\limits_{\Omega}|\langle  \tau_\alpha, x \rangle|^2\,d\mu(\alpha), \quad \forall x \in \mathcal{Q}.
		\end{align*}
	\end{enumerate}
\end{definition}
We consider the following subclass of continuous Parseval frames.
\begin{definition}\label{1B}
	A continuous Parseval frame $\{\tau_\alpha\}_{\alpha\in \Omega}$   for $\mathcal{Q}$ 	is said to be \textbf{1-bounded} if 
	\begin{align*}
		\|\tau_\alpha\|\leq 1, \quad \forall \alpha \in \Omega.
	\end{align*}
\end{definition}
Given a continuous 1-bounded Parseval frame   $\{\tau_\alpha\}_{\alpha\in \Omega}$ for $\mathcal{Q}$,   we define the \textbf{continuous Shannon entropy} at a point $x \in \mathcal{Q}_\tau$ as 
\begin{align*}
	S_\tau(x)\coloneqq  -\int\limits_{\Omega}\left|\left \langle \tau_\alpha, \frac{x}{\|x\|}\right\rangle \right|^2\log \left|\left \langle \tau_\alpha, \frac{x}{\|x\|}\right\rangle \right|^2\,d\mu(\alpha)\geq 0,
\end{align*}
where $\mathcal{Q}_\tau \coloneqq \{x \in \mathcal{Q}: \langle \tau_\alpha, x \rangle \neq 0, \forall \alpha \in \Omega\}$. We now have quaternionic version of a generalization of  Theorem \ref{DU}.
\begin{theorem}(\textbf{Quaternionic  Deutsch Uncertainty Principle})\label{CDUP} Let $(\Omega, \mu)$,  $(\Delta, \nu)$ be   measure spaces  and $\{\tau_\alpha\}_{\alpha\in \Omega}$, $\{\omega_\beta\}_{\beta \in \Delta}$ be  	1-bounded continuous Parseval frames for a  quaternionic Hilbert  space $\mathcal{Q}$.
	Then   
	\begin{align}\label{CDSP}
		\log (\mu(\Omega)\nu(\Delta))\geq	S_\tau(x)+S_\omega (x)\geq -2  \log \left(\frac{1+\displaystyle \sup_{\alpha \in \Omega, \beta \in \Delta}|\langle\tau_\alpha , \omega_\beta\rangle|}{2}\right)	\geq 0, \quad \forall x \in \mathcal{Q}_\tau \cap \mathcal{Q}_\omega.
	\end{align}
\end{theorem}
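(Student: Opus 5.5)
The proof splits into three inequalities, handled in order: the upper bound via Jensen's inequality, the lower bound via Jensen again combined with the Quaternionic Buzano Inequality (\ref{QBI}), and the final nonnegativity from the 1-boundedness of the frames.

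\textbf{Setup.} Fix $x\in\mathcal{Q}_\tau\cap\mathcal{Q}_\omega$ and put $y\coloneqq x/\|x\|$. Write $f(\alpha)\coloneqq|\langle\tau_\alpha,y\rangle|^2$ and $g(\beta)\coloneqq|\langle\omega_\beta,y\rangle|^2$. The Parseval property gives $\int_\Omega f\,d\mu=\|y\|^2=1$, so $f\,d\mu$ is a probability measure on $\Omega$, and likewise $g\,d\nu$ on $\Delta$. Since $\|\tau_\alpha\|\le1$, Cauchy--Schwarz gives $0<f\le 1$, so $-\log f\ge0$; this is also why each entropy is nonnegative. The upper bound and the nonnegativity claim are only meaningful when $\mu(\Omega),\nu(\Delta)<\infty$, so I assume this.

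\textbf{Upper bound.} Apply Jensen's inequality to the concave function $\log$ with respect to the probability measure $f\,d\mu$:
\begin{align*}
S_\tau(x)=\int_\Omega f\log\frac1f\,d\mu\le\log\int_\Omega f\cdot\frac1f\,d\mu=\log\mu(\Omega).
\end{align*}
The same argument gives $S_\omega(x)\le\log\nu(\Delta)$. Adding the two yields $\log(\mu(\Omega)\nu(\Delta))\ge S_\tau(x)+S_\omega(x)$.

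\textbf{Lower bound.} Let $c\coloneqq\sup_{\alpha,\beta}|\langle\tau_\alpha,\omega_\beta\rangle|$. Use the product measure $f(\alpha)g(\beta)\,d\mu\,d\nu$, which is again a probability measure. Then
\begin{align*}
S_\tau(x)+S_\omega(x)=-\iint f(\alpha)g(\beta)\log\bigl(f(\alpha)g(\beta)\bigr)\,d\mu\,d\nu .
\end{align*}
To get a bound with constant $2$ in front of the logarithm, write the integrand as $-2\log\sqrt{fg}$. Since $-\log$ is decreasing, it suffices to bound $\sqrt{f(\alpha)g(\beta)}$ pointwise by a constant. Now
\begin{align*}
\sqrt{f(\alpha)g(\beta)}=|\langle\tau_\alpha,y\rangle\langle y,\omega_\beta\rangle|,
\end{align*}
using $|\langle\omega_\beta,y\rangle|=|\langle y,\omega_\beta\rangle|$. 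By the Quaternionic Buzano Inequality (\ref{QBI}) with $\|y\|=1$,
\begin{align*}
|\langle\tau_\alpha,y\rangle\langle y,\omega_\beta\rangle|\le\frac{|\langle\tau_\alpha,\omega_\beta\rangle|+\|\tau_\alpha\|\|\omega_\beta\|}{2}\le\frac{c+1}{2}.
\end{align*}
Hence the integrand is at least $-2\log\frac{1+c}{2}$ pointwise. Integrating against the probability measure gives $S_\tau(x)+S_\omega(x)\ge-2\log\frac{1+c}{2}$. No Jensen step is needed here; a pointwise bound suffices.

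\textbf{Nonnegativity.} Cauchy--Schwarz and 1-boundedness give $c\le1$, so $\frac{1+c}{2}\le1$ and therefore $-2\log\frac{1+c}{2}\ge0$.

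\textbf{Main obstacle.} The delicate points are measure-theoretic. First, the product-measure manipulation needs Fubini/Tonelli, which in turn needs $\sigma$-finiteness (or finiteness) of $\mu$ and $\nu$. Second, the entropy integrals must be well defined, possibly equal to $+\infty$; this is harmless because both integrands are nonnegative. The upper bound likewise requires the measures to be finite. None of these steps uses commutativity of $\mathbb{H}$: only moduli of quaternions are involved, and $|pq|=|p||q|$ holds in $\mathbb{H}$.
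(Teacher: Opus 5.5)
Your proposal is correct and follows the paper's proof step for step. The upper bound comes from Jensen's inequality for the concave $\log$ against $f\,d\mu$ and $g\,d\nu$. The lower bound comes from writing $S_\tau(x)+S_\omega(x)$ as the iterated integral of $-2\log\sqrt{f(\alpha)g(\beta)}$ against $f(\alpha)g(\beta)$, then bounding $|\langle\tau_\alpha,y\rangle\langle y,\omega_\beta\rangle|$ pointwise by the Quaternionic Buzano Inequality and replacing $|\langle\tau_\alpha,\omega_\beta\rangle|$ by its supremum. Your $\sigma$-finiteness concern is unnecessary: as in the paper, only iterated integrals of $f(\alpha)g(\beta)$ times $-\log f(\alpha)$, $-\log g(\beta)$ or a constant occur, and these factor by linearity of nonnegative integrals without any Fubini--Tonelli theorem.
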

\begin{proof}
	Since 	$1=\int\limits_{\Omega}\left|\left \langle \frac{x}{\|x\|}, \tau_\alpha\right\rangle \right|^2\,d\mu(\alpha)$ for all $x \in \mathcal{Q} \setminus \{0\}$, $1=\int\limits_{\Delta}\left|\left \langle \frac{x}{\|x\|}, \omega_\beta\right\rangle \right|^2\,d\nu(\beta)$ for all $x \in \mathcal{Q} \setminus \{0\}$ and  $\log$ is concave, using Jensen's inequality  we get 
	\begin{align*}
		&S_\tau(x)+S_\omega (x)=\\
		&\int \limits_{\Omega}\left|\left \langle \frac{x}{\|x\|}, \tau_\alpha\right\rangle \right|^2\log \left(\frac{1}{\left|\left \langle \frac{x}{\|x\|}, \tau_\alpha\right\rangle \right|^2}\right)\,d\mu(\alpha)+\int \limits_{\Delta}\left|\left \langle \frac{x}{\|x\|}, \omega_\beta\right\rangle \right|^2\log \left(\frac{1}{\left|\left \langle \frac{x}{\|x\|}, \omega_\beta\right\rangle \right|^2}\right)\,d\nu(\beta)\\
		&\leq \log \left(\int \limits_{\Omega}\left|\left \langle \frac{x}{\|x\|}, \tau_\alpha\right\rangle \right|^2\frac{1}{\left|\left \langle \frac{x}{\|x\|}, \tau_\alpha\right\rangle \right|^2}\,d\mu(\alpha)\right)+\log \left(\int \limits_{\Delta}\left|\left \langle \frac{x}{\|x\|}, \omega_\beta\right\rangle \right|^2\frac{1}{\left|\left \langle \frac{x}{\|x\|}, \omega_\beta\right\rangle \right|^2}\,d\nu(\beta)\right)\\
		&=\log (\mu(\Omega))+\log\nu (\Delta))=\log (\mu(\Omega)\nu (\Delta)), \quad \forall x \in \mathcal{Q}_\tau \cap \mathcal{Q}_\omega.
	\end{align*}
	Let $x \in \mathcal{Q}_\tau \cap \mathcal{Q}_\omega$. Then using Inequality (\ref{QBI})  we get 
	\begin{align*}
		S_\tau(x)+S_\omega (x)&=-\int\limits_{\Delta}\int\limits_{\Omega}\left|\left \langle \frac{x}{\|x\|}, \tau_\alpha\right\rangle \right|^2\left|\left \langle \frac{x}{\|x\|}, \omega_\beta\right\rangle \right|^2\left[\log \left|\left \langle \frac{x}{\|x\|}, \tau_\alpha\right\rangle \right|^2+\log \left|\left \langle \frac{x}{\|x\|}, \omega_\beta\right\rangle \right|^2\right]\,d\mu(\alpha)\,d\nu(\beta)\\
		&=-\int\limits_{\Delta}\int\limits_{\Omega}\left|\left \langle \frac{x}{\|x\|}, \tau_\alpha\right\rangle \right|^2\left|\left \langle \frac{x}{\|x\|}, \omega_\beta\right\rangle \right|^2\log \left|\left \langle \frac{x}{\|x\|}, \tau_\alpha\right\rangle \left \langle \frac{x}{\|x\|}, \omega_\beta\right\rangle\right|^2\,d\mu(\alpha)\,d\nu(\beta)\\
		&=-2\int\limits_{\Delta}\int\limits_{\Omega}\left|\left \langle \frac{x}{\|x\|}, \tau_\alpha\right\rangle \right|^2\left|\left \langle \frac{x}{\|x\|}, \omega_\beta\right\rangle \right|^2\log \left|\left \langle \frac{x}{\|x\|}, \tau_\alpha\right\rangle \left \langle \frac{x}{\|x\|}, \omega_\beta\right\rangle\right|\,d\mu(\alpha)\,d\nu(\beta)\\
		&\geq -2\int\limits_{\Delta}\int\limits_{\Omega}\left|\left \langle \frac{x}{\|x\|}, \tau_\alpha\right\rangle \right|^2\left|\left \langle \frac{x}{\|x\|}, \omega_\beta\right\rangle \right|^2\log \left(\left\|\frac{x}{\|x\|}\right\|^2\frac{\|\tau_\alpha\|\|\omega_\beta\|+|\langle\tau_\alpha, \omega_\beta \rangle|}{2}\right)	\,d\mu(\alpha)\,d\nu(\beta)\\
		& =-2\int\limits_{\Delta}\int\limits_{\Omega}\left|\left \langle \frac{x}{\|x\|}, \tau_\alpha\right\rangle \right|^2\left|\left \langle \frac{x}{\|x\|}, \omega_\beta\right\rangle \right|^2\log \left(\frac{1+|\langle\tau_\alpha, \omega_\beta \rangle|}{2}\right)	\,d\mu(\alpha)\,d\nu(\beta)\\
		&\geq -2\int\limits_{\Delta}\int\limits_{\Omega}\left|\left \langle \frac{x}{\|x\|}, \tau_\alpha\right\rangle \right|^2\left|\left \langle \frac{x}{\|x\|}, \omega_\beta\right\rangle \right|^2\log \left(\frac{1+\displaystyle \sup_{\alpha \in \Omega, \beta \in \Delta}|\langle\tau_\alpha, \omega_\beta \rangle|}{2}\right)	\,d\mu(\alpha)\,d\nu(\beta)\\
		&=-2\log \left(\frac{1+\displaystyle \sup_{\alpha \in \Omega, \beta \in \Delta}|\langle\tau_\alpha, \omega_\beta \rangle|}{2}\right)\int\limits_{\Delta}\int\limits_{\Omega}\left|\left \langle \frac{x}{\|x\|}, \tau_\alpha\right\rangle \right|^2\left|\left \langle \frac{x}{\|x\|}, \omega_\beta\right\rangle \right|^2\,d\mu(\alpha)\,d\nu(\beta)\\
		&\geq-2\log \left(\frac{1+\displaystyle \sup_{\alpha \in \Omega, \beta \in \Delta}|\langle\tau_\alpha, \omega_\beta \rangle|}{2}\right).
	\end{align*}	
\end{proof}
\begin{corollary}
	Let $\{\tau_j\}_{j=1}^n$,  $\{\omega_k\}_{k=1}^m$ be two Parseval frames for a  finite dimensional quaternionic Hilbert space $\mathcal{Q}$. Then 
	\begin{align*}
		 \log n+\log m \geq S_\tau (x)+S_\omega (x)\geq -2 \log \left(\frac{1+\displaystyle \max_{1\leq j \leq n, 1\leq k \leq m}|\langle\tau_j , \omega_k\rangle|}{2}\right)	\geq 0, \quad \mbox{for all } x \in \mathcal{Q}_\tau.
	\end{align*}	
\end{corollary}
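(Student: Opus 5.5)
The corollary follows by specializing Theorem \ref{CDUP} to counting measures. Take $\Omega=\{1,\dots,n\}$ with counting measure $\mu$ and $\Delta=\{1,\dots,m\}$ with counting measure $\nu$. Then $\mu(\Omega)=n$, $\nu(\Delta)=m$, and all integrals become finite sums. Measurability is automatic on a finite set. The Parseval condition $\|x\|^2=\sum_{j=1}^n|\langle \tau_j,x\rangle|^2$ is exactly the continuous Parseval frame condition for counting measure. The left-hand bound $\log(\mu(\Omega)\nu(\Delta))=\log n+\log m$ and the supremum becoming a maximum over $j,k$ are then immediate.

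The one hypothesis of Theorem \ref{CDUP} that is not in the statement is 1-boundedness (Definition \ref{1B}). I would derive it from the Parseval property. Apply the Parseval identity to $x=\tau_k$:
\begin{align*}
\|\tau_k\|^2=\sum_{j=1}^n|\langle \tau_j,\tau_k\rangle|^2\geq |\langle \tau_k,\tau_k\rangle|^2=\|\tau_k\|^4.
\end{align*}
This gives $\|\tau_k\|\leq 1$, and the same argument gives $\|\omega_k\|\leq1$. Hence both frames are 1-bounded continuous Parseval frames, and every inequality of (\ref{CDSP}) transfers verbatim.

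The main point needing care is the domain of $x$. The statement writes $x\in\mathcal{Q}_\tau$, but the entropies $S_\tau(x)$ and $S_\omega(x)$ both involve logarithms of $|\langle \tau_j,x\rangle|^2$ and $|\langle \omega_k,x\rangle|^2$. Theorem \ref{CDUP} assumes $x\in\mathcal{Q}_\tau\cap\mathcal{Q}_\omega$. I would prove the statement on $\mathcal{Q}_\tau\cap\mathcal{Q}_\omega$. Alternatively, one can adopt the standard convention $0\log 0=0$. In Jensen's step, terms with vanishing coefficients then drop out, so the sum of $1/|\cdot|^2$ weights runs over a subset and gives at most $\log n$. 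In the lower bound, those terms contribute $0$ to both sides, and the final double-sum identity still equals $1$. So the argument extends, and I would remark on this convention explicitly.

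Finally, nonnegativity of the right-hand side is inherited: by Cauchy--Schwarz and 1-boundedness, $|\langle\tau_j,\omega_k\rangle|\leq1$, so $\frac{1+\max|\langle\tau_j,\omega_k\rangle|}{2}\leq 1$ and its negative logarithm is at least $0$.
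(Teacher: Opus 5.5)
Your proposal is correct and takes the same route as the paper, which states this corollary without proof as the specialization of Theorem \ref{CDUP} to counting measures on $\{1,\dots,n\}$ and $\{1,\dots,m\}$. You also fill in two points the paper leaves implicit. First, you derive 1-boundedness from the Parseval identity via $\|\tau_k\|^2\geq\|\tau_k\|^4$. Second, you note that $x$ should range over $\mathcal{Q}_\tau\cap\mathcal{Q}_\omega$, or that the convention $0\log 0=0$ makes the statement valid for all $x\in\mathcal{Q}_\tau$.
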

\begin{corollary}
	Let $\{\tau_j\}_{j=1}^n$,  $\{\omega_j\}_{j=1}^n$ be two orthonormal bases for a  finite dimensional quaternionic Hilbert space $\mathcal{Q}$. Then 
\begin{align*}
	2 \log n \geq S_\tau (x)+S_\omega (x)\geq -2 \log \left(\frac{1+\displaystyle \max_{1\leq j, k \leq n}|\langle\tau_j , \omega_k\rangle|}{2}\right)	\geq 0, \quad \mbox{for all } x \in \mathcal{Q}_\tau.
\end{align*}	
\end{corollary}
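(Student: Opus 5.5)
This corollary follows by specializing Theorem \ref{CDUP} to counting measures. Take $\Omega=\{1,\dots,n\}$ and $\Delta=\{1,\dots,n\}$, each with counting measure, and set $\tau_\alpha=\tau_j$, $\omega_\beta=\omega_k$.

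First we check that the hypotheses of Theorem \ref{CDUP} hold.
\begin{itemize}
\item Measurability is automatic on a finite set with the power-set $\sigma$-algebra.
\item Each orthonormal basis is a Parseval frame. The quaternionic Parseval identity $\|x\|^2=\sum_{j=1}^n|\langle \tau_j,x\rangle|^2$ holds for an orthonormal basis of a finite dimensional right quaternionic Hilbert space. To see this, expand $x=\sum_j \tau_j\langle \tau_j,x\rangle$ and use properties (ii)--(iv) of the inner product; the quaternionic coefficients appear on the right, consistent with the right-linearity convention.
\item The frames are 1-bounded, since $\|\tau_j\|=\|\omega_k\|=1$.
\item The integrals in the continuous Shannon entropy become finite sums. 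Since $|\langle \tau_j,x\rangle|=|\langle x,\tau_j\rangle|$ by conjugate symmetry, $S_\tau$ agrees with the discrete entropy in the corollary.
\end{itemize}

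Next we read off the bounds. We have $\mu(\Omega)\nu(\Delta)=n\cdot n$, so the left-hand side of (\ref{CDSP}) is $\log(n^2)=2\log n$. The supremum over $\alpha,\beta$ becomes a maximum over the finite index set. The nonnegativity of $-2\log\frac{1+\max|\langle\tau_j,\omega_k\rangle|}{2}$ comes from Cauchy--Schwarz: $|\langle\tau_j,\omega_k\rangle|\le 1$, so the argument of the logarithm is at most $1$.

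The main point needing care is the domain of $x$. Theorem \ref{CDUP} requires $x\in\mathcal{Q}_\tau\cap\mathcal{Q}_\omega$, but the corollary states only $x\in\mathcal{Q}_\tau$. I would read the corollary with $x\in\mathcal{Q}_\tau\cap\mathcal{Q}_\omega$, matching Theorem \ref{DU}, so that both entropies are defined.

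Alternatively, one can extend to $x\in\mathcal{Q}_\tau$ with some $\langle\omega_k,x\rangle=0$ by the convention $0\log 0=0$, and rerun the proof of Theorem \ref{CDUP} directly on sums. Terms with a vanishing coefficient contribute zero to the double sum. The Jensen step is then applied only over the support of the coefficients, which gives the bound $\log|\text{support}|\le\log n$. This remains valid, so the inequality chain survives.

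Finally, the Buzano step uses Inequality (\ref{QBI}) with $\|x/\|x\|\|=1$, giving $|\langle \tfrac{x}{\|x\|},\tau_j\rangle\langle \tfrac{x}{\|x\|},\omega_k\rangle|\le \tfrac{1+|\langle\tau_j,\omega_k\rangle|}{2}$. Summing with weights whose double sum equals $1$ then yields the lower bound exactly as in Theorem \ref{CDUP}.
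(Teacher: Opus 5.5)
Your proposal is correct and follows the paper's route: the corollary is Theorem \ref{CDUP} specialized to counting measure on $\{1,\dots,n\}$, so that $\log(\mu(\Omega)\nu(\Delta))=2\log n$ and the supremum becomes a maximum (the paper states the corollary without proof). Your observation that the stated domain $\mathcal{Q}_\tau$ should be $\mathcal{Q}_\tau\cap\mathcal{Q}_\omega$ correctly flags a slip in the statement, and your alternative extension via the convention $0\log 0=0$ and Jensen's inequality on the support is also valid.
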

Theorem \ref{CDUP} promotes the following question.
\begin{question}
	Let $(\Omega, \mu)$,  $(\Delta, \nu)$ be   measure spaces,    $\mathcal{Q}$ be a quaternionic Hilbert space. For which pairs of 1-bounded continuous Parseval  frames $ \{\tau_\alpha\}_{\alpha\in \Omega}$  and   $\{\omega_\beta\}_{\beta\in \Delta}$ for  $\mathcal{Q}$, we have equality in Inequality (\ref{CDSP})?
\end{question}
In 1988, Maassen  and Uffink (motivated from the conjecture  by Kraus made in 1987 \cite{KRAUS}) improved Deutsch entropic uncertainty principle by using Riesz-Thorin interpolation theorem \cite{MAASSENUFFINK}.
\begin{theorem}\cite{MAASSENUFFINK} 
	(\textbf{Maassen-Uffink Entropic Uncertainty Principle})  \label{MU}
	Let $\{\tau_j\}_{j=1}^n$,  $\{\omega_j\}_{j=1}^n$ be two orthonormal bases for a  finite dimensional Hilbert space $\mathcal{H}$. Then 
	\begin{align*}
		2 \log n \geq S_\tau (h)+S_\omega (h)\geq -2 \log \left(\displaystyle \max_{1\leq j, k \leq n}|\langle\tau_j , \omega_k\rangle|\right), \quad \forall h \in \mathcal{H}_\tau \cap \mathcal{H}_\omega.
	\end{align*}	
\end{theorem}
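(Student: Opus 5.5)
The upper bound $2\log n \geq S_\tau(h)+S_\omega(h)$ will come from Jensen's inequality for the concave function $\log$, exactly as in the proof of Theorem \ref{CDUP}. Since $\{\tau_j\}_{j=1}^n$ is an orthonormal basis, the numbers $p_j\coloneqq |\langle h/\|h\|,\tau_j\rangle|^2$ satisfy $\sum_j p_j=1$. Hence
\begin{align*}
S_\tau(h)=\sum_{j}p_j\log(1/p_j)\leq \log\Big(\sum_j p_j\cdot p_j^{-1}\Big)=\log n,
\end{align*}
and the same bound holds for $S_\omega(h)$.

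The lower bound is the real content. Here the Buzano inequality (\ref{BI}) only gives $(1+c)/2$ with $c\coloneqq \max_{j,k}|\langle\tau_j,\omega_k\rangle|$, so I will follow Maassen--Uffink and use Riesz--Thorin interpolation instead.

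Consider the linear map $U:\mathbb{C}^n\to\mathbb{C}^n$ defined by $(Ua)_k=\sum_j \langle \tau_j,\omega_k\rangle a_j$. This is the change of coordinates from the $\tau$-basis to the $\omega$-basis, so $U$ is unitary and $\|U\|_{2\to2}=1$. Each entry of $U$ has modulus at most $c$, so $\|U\|_{1\to\infty}\leq c$. By Riesz--Thorin, for $1\leq p\leq 2$ and conjugate exponent $p'$,
\begin{align*}
\|Ua\|_{p'}\leq c^{\frac{2}{p}-1}\|a\|_p .
\end{align*}
I will apply this with $a_j=\langle h/\|h\|,\tau_j\rangle$, so that $Ua$ gives the coefficients $b_k=\langle h/\|h\|,\omega_k\rangle$ up to the conjugation convention. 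Taking logarithms, the inequality reads $F(p)\leq 0$, where
\begin{align*}
F(p)\coloneqq \log\|b\|_{p'}-\log\|a\|_p-\Big(\tfrac{2}{p}-1\Big)\log c .
\end{align*}
At $p=2$ we have $F(2)=0$ by Parseval. So $F'(2^-)\geq 0$, since $F\leq 0$ to the left of $2$ and $F(2)=0$.

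The key step is computing this derivative. One uses
\begin{align*}
\frac{d}{dp}\log\|a\|_p\Big|_{p=2}=-\tfrac14\sum_j|a_j|^2\log|a_j|^2 ,
\end{align*}
together with the analogous formula for $p'$, noting that $dp'/dp=-1$ at $p=2$. Because $\sum|a_j|^2=\sum|b_k|^2=1$, the derivative works out to $F'(2)=\tfrac14\big(S_\tau(h)+S_\omega(h)\big)+\tfrac12\log c$. The condition $F'(2^-)\geq0$ then gives exactly $S_\tau(h)+S_\omega(h)\geq -2\log c$.

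The main obstacle is getting the signs right in this derivative computation, and justifying the differentiation. Differentiation is fine because all $a_j,b_k$ are nonzero on $\mathcal{H}_\tau\cap\mathcal{H}_\omega$ and the dimension is finite, so every function involved is smooth in $p$. For the sign bookkeeping I will check the final formula against the case of mutually unbiased bases, $c=1/\sqrt n$, where the bound $\log n$ must come out.
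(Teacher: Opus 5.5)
The paper gives no proof of this statement. It quotes the result from Maassen and Uffink and remarks only that they used the Riesz--Thorin theorem; its own Buzano-based method yields only the weaker Deutsch bound, as you note. Your argument is that Riesz--Thorin proof, and its structure is sound:
\begin{itemize}
\item $U_{kj}=\langle\tau_j,\omega_k\rangle$ is unitary.
\item With the paper's convention (inner product linear in the first slot), $b=Ua$ exactly.
\item $\|U\|_{1\to\infty}\le c$, so interpolation gives $\|Ua\|_{p'}\le c^{2/p-1}\|a\|_p$ on $[1,2]$.
\item $F(2)=0$ and $F\le 0$ on $[1,2)$, and $F$ is smooth near $2$, so $F'(2)\ge 0$.
\item The Jensen upper bound is fine.
\end{itemize}

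There is, however, a sign slip in the one derivative formula you display. Since $\sum_j|a_j|^2=1$,
\begin{align*}
\frac{d}{dp}\log\|a\|_p\Big|_{p=2}=\frac{1}{2}\sum_j|a_j|^2\log|a_j|=\frac14\sum_j|a_j|^2\log|a_j|^2=-\frac14 S_\tau(h)\le 0 .
\end{align*}
It must be non-positive, because $\ell^p$ norms decrease in $p$; you wrote the opposite sign.

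With the correct formula, the chain rule ($dp'/dp=-1$ at $p=2$) gives $\frac{d}{dp}\log\|b\|_{p'}\big|_{p=2}=+\frac14 S_\omega(h)$. Also $\frac{d}{dp}\bigl[-(\tfrac2p-1)\log c\bigr]\big|_{p=2}=\frac12\log c$. Hence $F'(2)=\frac14\bigl(S_\tau(h)+S_\omega(h)\bigr)+\frac12\log c$, which is exactly the expression you state.

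Used literally, your displayed formula would instead give $F'(2)=-\frac14\bigl(S_\tau(h)+S_\omega(h)\bigr)+\frac12\log c$. That leads to $S_\tau(h)+S_\omega(h)\le 2\log c\le 0$, which is false for $n\ge 2$. Correct that one line and the proof is complete.
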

Based on Theorems \ref{MU} and Theorem \ref{CDUP} we formulate following conjecture.
\begin{conjecture} \textbf{(Quaternionic Kraus Conjecture)}
	Let $(\Omega, \mu)$,  $(\Delta, \nu)$ be   measure spaces  and $\{\tau_\alpha\}_{\alpha\in \Omega}$, $\{\omega_\beta\}_{\beta \in \Delta}$ be  	1-bounded continuous Parseval frames    for a  quaternionic Hilbert  space $\mathcal{Q}$. Then 
		\begin{align*}
			S_\tau(x)+S_\omega (x)\geq -2  \log \left(\displaystyle \sup_{\alpha \in \Omega, \beta \in \Delta}|\langle\tau_\alpha , \omega_\beta\rangle|\right)	\geq 0, \quad \forall x \in \mathcal{Q}_\tau \cap \mathcal{Q}_\omega.	
	\end{align*}
\end{conjecture}

\section{Open Problem}
Breakthrough Crouzeix theorem says following. 
\begin{theorem} \label{CT} \cite{CROUZEIX, CROUZEIX2, CROUZEIX3, BICKELGORKIN, RANSFORDSCHWENNINGER} (\textbf{Crouzeix Theorem}) 
	For every $d \in \mathbb{N}$ and for every matrix $M\in \mathbb{M}_{d}(\mathbb{C})$, we have 
	\begin{align*}
		(\text{\textbf{Crouzeix-Palencia Inequality}})	\quad \quad \quad 	\|p(M)\|\leq (1+\sqrt{2}) \sup\left\{|p(z)|: z \in W_\mathbb{C}(M)\right\},  \quad \forall p \in \mathbb{C}[z].
	\end{align*}	
\end{theorem}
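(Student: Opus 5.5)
The plan is to follow the Crouzeix--Palencia strategy: a Cauchy-integral representation of $p(M)$ over the boundary of a convex set containing $W_\mathbb{C}(M)$, a positivity property of the boundary kernel coming from the numerical range, and then a quadratic inequality in the unknown norm. We do not need the quaternionic material developed above, only classical complex analysis and operator theory on $\mathbb{C}^d$.

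\emph{Step 1: reduction to a smooth convex domain.} Fix $M$ and $p$. Since $W_\mathbb{C}(M)$ is compact and convex (Toeplitz--Hausdorff) and contains the spectrum of $M$, choose a bounded convex domain $\Omega$ with smooth boundary and $W_\mathbb{C}(M)\subset \Omega\subset W_\mathbb{C}(M)+\{|z|<\varepsilon\}$. It then suffices to prove $\|f(M)\|\leq (1+\sqrt{2})\sup_{\overline{\Omega}}|f|$ for $f$ holomorphic on a neighbourhood of $\overline{\Omega}$, and to let $\varepsilon\to 0$, using continuity of $p$. Normalize so that $\sup_{\overline{\Omega}}|f|=1$.

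\emph{Step 2: the positive boundary kernel.} For $\sigma\in\partial\Omega$ with arc length $s$ and outward normal $\nu(\sigma)$, define
\begin{align*}
\mu(\sigma,M)\,ds\coloneqq \frac{1}{2\pi i}(\sigma-M)^{-1}d\sigma-\frac{1}{2\pi i}\left((\sigma-M)^{-1}\right)^*\overline{d\sigma}.
\end{align*}
Since $d\sigma=i\nu(\sigma)\,ds$, this is $\frac{1}{\pi}\operatorname{Re}\big(\nu(\sigma)(\sigma-M)^{-1}\big)ds$ in the operator sense. By convexity, $\operatorname{Re}\big(\overline{\nu(\sigma)}(\sigma-\langle Mh,h\rangle)\big)>0$ for every unit vector $h$, because $\Omega$ lies on one side of the tangent line at $\sigma$. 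Rewriting $\langle \operatorname{Re}(\nu(\sigma)(\sigma-M)^{-1})k,k\rangle$ with $k=(\sigma-M)h$ shows $\mu(\sigma,M)\geq 0$. The Cauchy formula gives $\int_{\partial\Omega}\mu(\sigma,M)\,ds=2I$.

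\emph{Step 3: the key identity and the bound on $g$.} Set
\begin{align*}
g(z)\coloneqq \frac{1}{2\pi i}\int_{\partial\Omega}\frac{\overline{f(\sigma)}}{\sigma-z}\,d\sigma .
\end{align*}
Then $f(M)+g(M)^*=\int_{\partial\Omega}f(\sigma)\mu(\sigma,M)\,ds$. Because $\mu\geq 0$, a standard lemma (a positive operator-valued measure with total mass $2I$ integrated against a function bounded by $1$) yields $\|f(M)+g(M)^*\|\leq 2$. Applying the same computation with $M$ replaced by a scalar $z\in\Omega$ expresses $f(z)+\overline{g(z)}$ as a double-layer potential, which is an average of $f$ with positive weight. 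Hence $\sup_{\Omega}|g|\leq 1$ for convex $\Omega$.

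\emph{Step 4: the quadratic inequality (main obstacle).} Let $c$ be the best constant, so that $\|h(M)\|\leq c\,\sup_{\overline\Omega}|h|$ for all admissible $h$. We must show $c^2\leq 2c+1$, which forces $c\leq 1+\sqrt{2}$. The idea is to write $A=f(M)$, $B=g(M)$ and $S=A+B^*$, and expand
\begin{align*}
A^*A\,A^*A=A^*(S-B^*)A\,A^*A .
\end{align*}
Terms like $(fg)(M)$ or $B^*A^*$-type products are then controlled through functions bounded by $1$ on $\Omega$, using Step 3, together with $\|S\|\leq 2$. One then takes $f$ nearly extremal, so that $\|A\|\approx c$, with an associated unit vector almost attaining the norm.

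This last combinatorial step is where I expect the real difficulty. Crouzeix--Palencia choose the extremal $f$ so that $f(M)^*$ maps a norming vector in a controlled way, and the bookkeeping has to land exactly on $c^2\leq 2c+1$. I would first attempt it with the ansatz $\|A x\|=c$, $\|x\|=1$, estimating $c^2=\langle A^*Ax,x\rangle$ via $A^*=S^*-B$ and bounding $\langle BAx,x\rangle$ by applying $c$ to the product $gf$. If the constant comes out worse, the fallback is their refined use of a Blaschke-type choice of the extremal function.
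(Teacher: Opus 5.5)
The paper only cites this theorem without proof, so the benchmark is the Crouzeix--Palencia argument. Your Steps 1--2 and the bound $\|f(M)+g(M)^*\|\leq 2$ reproduce it correctly. There are, however, two genuine gaps, and each one costs exactly the constant $1+\sqrt2$.

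\textbf{Step 3.} Your justification of $\sup_\Omega|g|\leq 1$ does not work. For $z\in\Omega$ the double-layer kernel is $\mu(\sigma,z)\,ds=\frac{1}{\pi}\,d_\sigma\arg(\sigma-z)$. It is positive, but its total mass is $2$, so it only gives $|f(z)+\overline{g(z)}|\leq 2$, hence $|g(z)|\leq 3$. What you need is the boundary version. Let $z\to z_0\in\partial\Omega$; the jump relation of the double-layer potential gives $\overline{g(z_0)}=\int_{\partial\Omega}f(\sigma)\mu(\sigma,z_0)\,ds$. Now $\mu(\cdot,z_0)\geq 0$ by convexity and has total mass $1$. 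So $|g|\leq 1$ on $\partial\Omega$, and then on $\Omega$ by the maximum modulus principle. This is not cosmetic: with only $\sup_\Omega|g|\leq 3$, the final step yields $c\leq 3$.

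\textbf{Step 4.} This is the main gap, and you leave it open. Your displayed identity is wrong as written, since $A^*(S-B^*)AA^*A=A^*AAA^*A$. Your concrete ansatz $c^2=\langle S^*Ax,x\rangle-\langle (gf)(M)x,x\rangle$ only gives $c^2\leq 2c+c$, i.e.\ $c\leq 3$. No Blaschke products are needed to do better.

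The missing idea is to sandwich $B$ between two copies of $A$, so that the error term is a single function of $M$. For $\sup_{\overline\Omega}|f|\leq 1$,
\begin{align*}
\|A\|^3=\|AA^*A\|=\|AS^*A-(fgf)(M)\|\leq 2\|A\|^2+c\sup_{\overline\Omega}|fgf|\leq 2\|A\|^2+c .
\end{align*}
Taking the supremum over such $f$ gives $c^3\leq 2c^2+c$. Since $c\geq 1$, this is $c^2\leq 2c+1$, i.e.\ $c\leq 1+\sqrt2$. Your quartic expansion also works if you substitute in the middle, $A^*AA^*A=A^*A(S^*-B)A$. It fails if $B$ ends up adjacent to only one $A$, since that produces $(gf)(M)A^*A$.

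\textbf{A technical point.} The best constant $c$ must be defined over a class that contains $g$ and is closed under products, for instance functions holomorphic in $\Omega$ and continuous on $\overline\Omega$. Your class from Step 1 (functions holomorphic near $\overline\Omega$) does not qualify, because the Cauchy transform $g$ is generally not holomorphic across $\partial\Omega$. You also need $c<\infty$, which follows from the Cauchy integral over a contour in $\Omega$ surrounding $W_\mathbb{C}(M)$.
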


Based on Theorem \ref{CT}, we formulate following problem. 
\begin{problem} (\textbf{Quaternionic Crouzeix Problem})
	 Whether there is a universal constant $R_\mathbb{H}$  satisfying following: For every $d \in \mathbb{N}$ and for every $A\in \mathbb{M}_{d}(\mathbb{H})$, we have 
	\begin{align*}
		\|p(A)\|\leq R_\mathbb{H} \sup\left\{|p(z)|: z \in W_\mathbb{H}(A)\right\},  \quad \forall p(z)\coloneqq a_0+a_1z+a_2z^2+\cdots +a_nz^n \in \mathbb{H}[z], n \in \mathbb{N}
	\end{align*}
	where $\mathbb{H}[z]$ is the set of all polynomials over $\mathbb{H}$. 
We conjecture that $R_\mathbb{H}=8$.
\end{problem}

 \bibliographystyle{plain}
 \bibliography{reference.bib}

\end{document}